\newcommand{\ben}{\begin{enumerate}} 
\newcommand{\een}{\end{enumerate}}
 \newcommand{\T}{ \mathrm{T}}
  \def \bc{\vskip 0cm \begin{center}}
\def \ec{\end{center} \vskip 0cm} 
\def\bea{\begin{eqnarray*}}
  \def \eea{\end{eqnarray*}} 
  \def\R{\mathbb{R}}
   \def\H{\mathbb{H}}
\def\C{\mathbb{C}} 
\def\Z{\mathbb{Z}} 
\def\O{\mathcal{O}}
\newtheorem{thm}{Theorem}[section]
\newtheorem{cor}[thm]{Corollary} 
\newtheorem{lem}[thm]{Lemma}
\theoremstyle{definition}
\newtheorem{Def}{Definition}[section]
\theoremstyle{remark}
\newtheorem*{rem}{Remark}
\begin{document} 
\pagestyle{plain}

\begin{center}
{\Large {\bf Templates for geodesic flows }}
\vskip 1cm
 {\large { Tali Pinsky }}
\vskip 2cm
\end{center}

\section*{Abstract}
The fact that the modular template coincides with the Lorenz template, discovered by Ghys, implies modular knots have very peculiar properties. We obtain a generalization of these results to other Hecke triangle groups. In this context, the geodesic flow can never be seen as a flow on a subset of $S^3$, and one is led to consider embeddings into lens spaces. We will geometrically construct homeomorphisms from the unit tangent bundles of the orbifolds into the lens spaces, elliminating the need for elliptic functions. Finally we will use these homeomorphisms to compute templates for the geodesic flows. This offers a tool for topologically investigating their otherwise well studied periodic orbits.

\section{Introduction}
\subsection{motivation}
The study of periodic orbits in dynamical systems is a basic problem with a long history. In a variety of
examples one would like to find periodic orbits, and to study their properties and general structure. This is of great importance, for instance, in taking the semi-classical limit of dynamical systems \cite{Quantization}, or for computing averages of observables in both classical and quantum chaotic systems
\cite{statistics}.

We are interested in the case of flows in three dimensional manifolds. A periodic orbit of such a flow is an embedding of $S^1$ into the 3-manifold, hence a knot, and one can ask
which knot types arise as periodic orbits for a certain flow, or what knot invariants (if any) do they share.

This problem was considered by Birman and Williams in \cite{Birwil1} and \cite{Birwil2},  who analyzed two examples. 
The first example is the flow associated with the famous Lorenz equations. Here it was shown that the family of knots arising as periodic orbits,  
the so called
\textquotedblleft Lorenz knots\textquotedblright has very special properties. As an example of their results we mention that Lorenz
knots are prime and every Lorenz link is a fibered link, and a positive braid. These results are based 
on the fact that all periodic orbits of the Lorenz flow are described by a simple combinatorial construction, called the template. The Lorenz template is given in Figure \ref{Lorenz_template} below, together with a typical periodic orbit. All periodic orbits of the equations arise as orbits in the Lorenz template. 

\begin{figure}[h]\centering \includegraphics[width=6cm]{./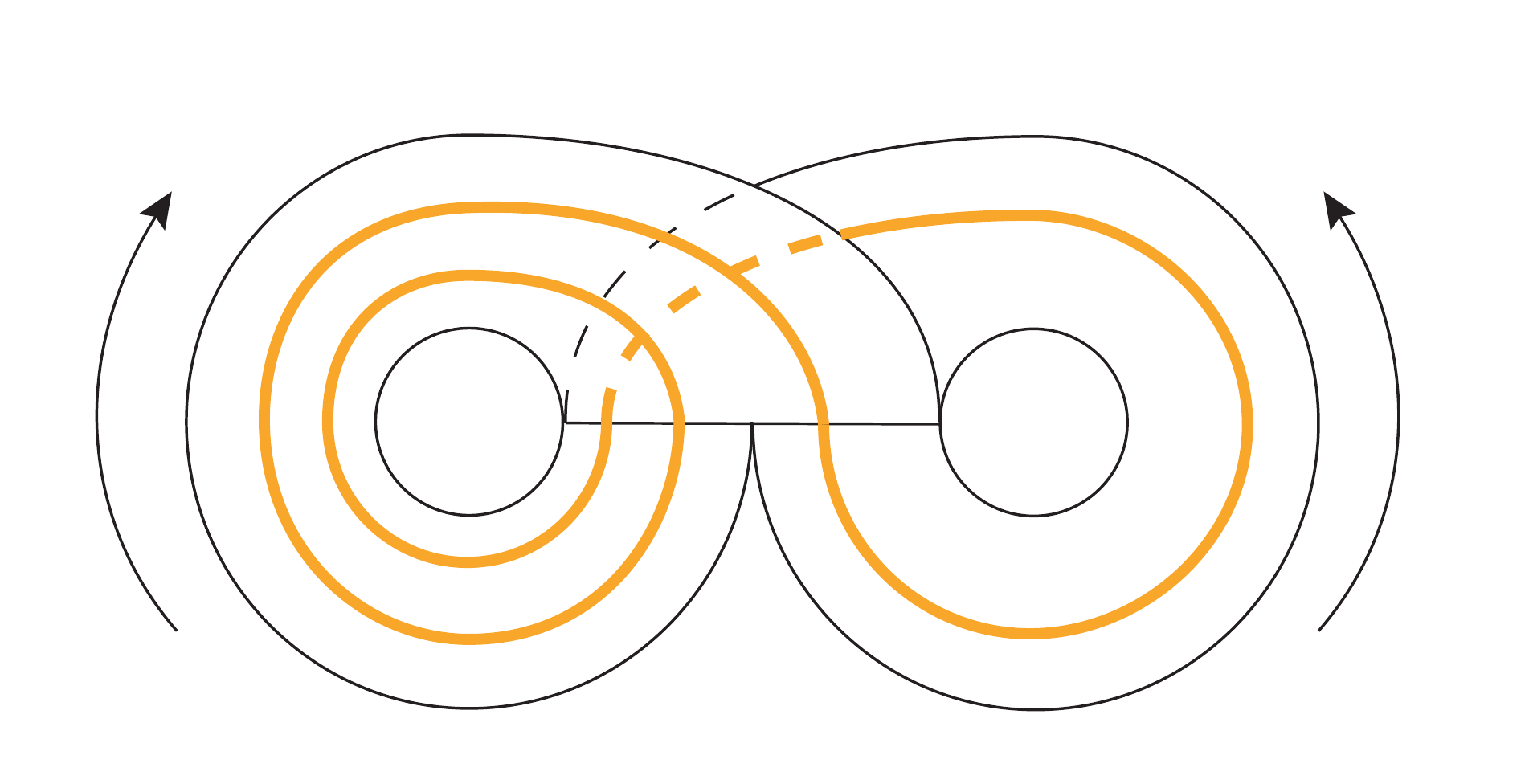} \caption{The Lorenz
template and a periodic orbit}\label{Lorenz_template} \end{figure}

The second example considered by Birman and Williams is the suspension flow on the complement of the figure eight knot in $S^3$. They have also constructed a template for the periodic orbits of this system. But, remarkably, here it was shown by Ghrist \cite{fig8} that the template is in fact a universal template : every possible knot in $S^3$ arises as a periodic orbit of this flow, without exception.  In the same paper Ghrist has shown that some templates termed Lorenz-like, studied before by Sullivan \cite{Sullivan} are also universal. In particular, even one half twist in one of the ears of the Lorenz template gives rise to a universal template, see Figure \ref{Lorenz_like_template}.

\begin{figure}[h]\centering \includegraphics[width=6cm]{./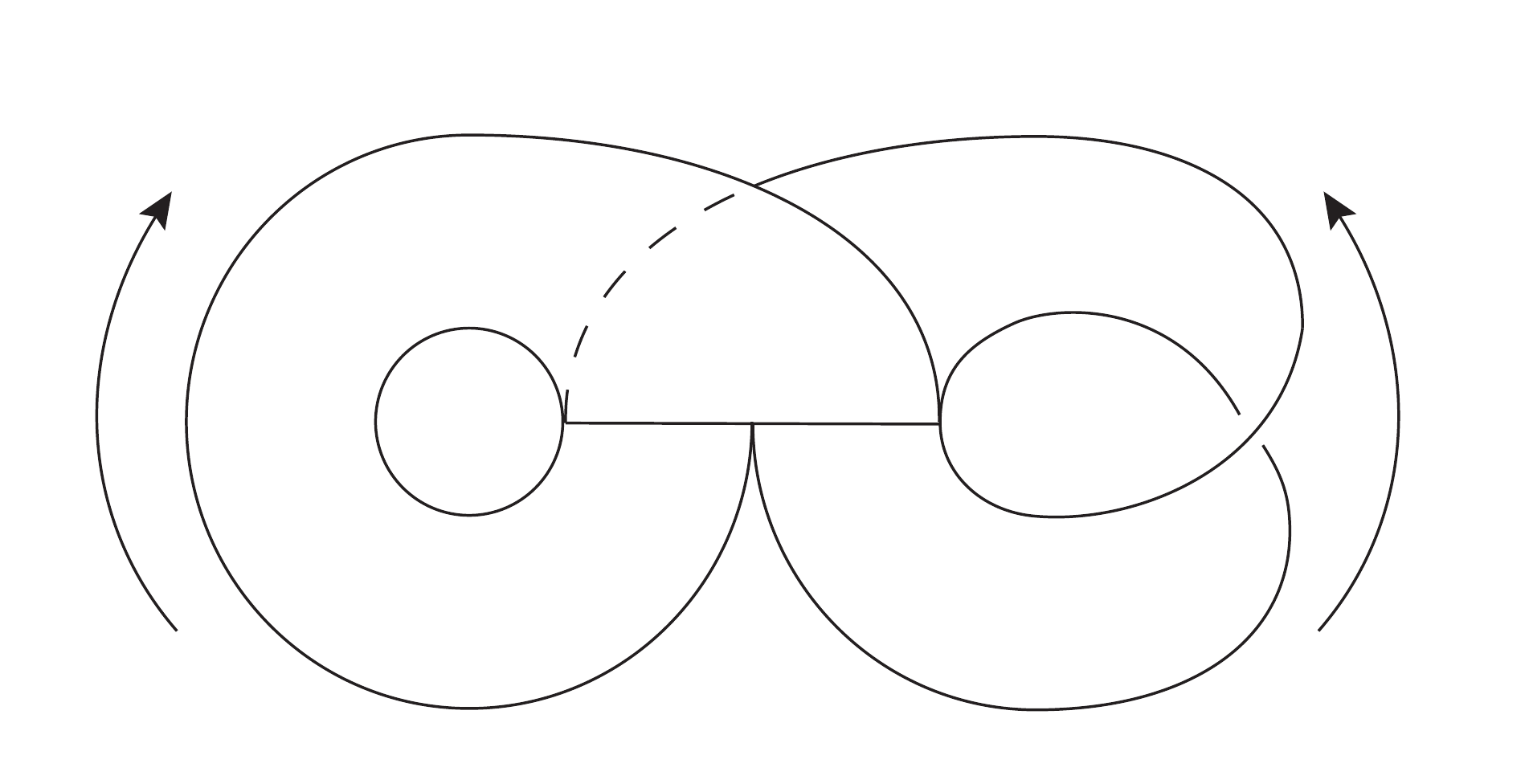}\caption{A universal Lorenz-like template}
\label{Lorenz_like_template} \end{figure}

Thus, constructing a template for a given flow  has far reaching consequences in the study of  knot types arising as periodic orbits, enabling a description of their knot invariants, or a proof of their universality. The construction, however, is necessarily very delicate, since even seemingly minute changes can dramatically affect the properties of the knots produced by the template. 

 In the important case of hyperbolic flows Birman and Williams have proved that a template always exists \cite{Birwil2}, but its explicit  construction in specific cases remains a difficult challenge. 
Even within the well-studied class of hyperbolic flows consisting of geodesic flows on the unit tangent bundle of surfaces of constant negative curvature,  the first construction of a template was achieved only recently, for the modular surface. Namely, Ghys \cite{Ghys} established the extraordinary fact that the modular template  coincides with  the
Lorenz template.  In particular, this fact implies that the modular knots share the special properties of the Lorenz knot family mentioned above. 

It is a compelling problem to understand which properties of the periodic orbits of the modular surface (if any) hold for the periodic orbits of the geodesic flows on other surfaces. 
The first step in the solution of this problem is constructing templates for these flows, and this will be our goal in the present paper. 

\subsection{Description of results}

We will focus on a class of surfaces which form a  natural generalization of the modular surface, namely the class of orbifolds with two cone points of orders $2$ and $k$ and one cusp, with $k$ odd. The modular surface   
appears as the first member of this family, with $k=3$. Alternatively, the surfaces in question arise 
as $\H^2/\Gamma(2,k)$ where $\Gamma(2,k)$ is the Hecke triangle group associated with the triangle $T(2,k,\infty)$. 

We will give a complete description of the template for the geodesic flow on the unit tanget bundle of these surfaces. Consideration of orbifolds with a cone point of arbitrary order gives rise to significant new phenomena and is considerably more complicated than the case of the modular surface. To begin with, let us note that Ghys' proof makes essential use of the fact that the unit tangent bundle of the modular surface can be identified with the complement of the trefoil knot in the unit sphere $S^3$. This identification is implemented via the embedding of the space of unimodular lattices 
in $\R^2$ 
into $P(\C^2)$ afforded by  the Weierstrass invariants $g_2$, $g_3$. However,  we will see that 
the modular surface is the only surface in our class whose unit tangent bundle can be identified with the complement of a knot in $S^3$. For all other surfaces in our class, the unit tangent bundle is identified with the complement of a knot in a non-trivial lens space. It is only the lift of this knot to $S^3$ which is the $(2,n)$ torus knot, generalizing the $(2,3)$  case of the modular surface. We note that the lens space in question is not determined uniquely, and in fact the unit tangent bundle of each $(2,n)$ orbifold embeds into a specific countable family of lens spaces, whose parameters are given as explicit functions of the Euler number of the bundle and $n$. 

The templates we construct are the ones arising for Euler number zero. For the case of the $(2,5)$ orbifold, for example, we obtain :

\begin{figure}[h]\centering \includegraphics[width=9cm]
{./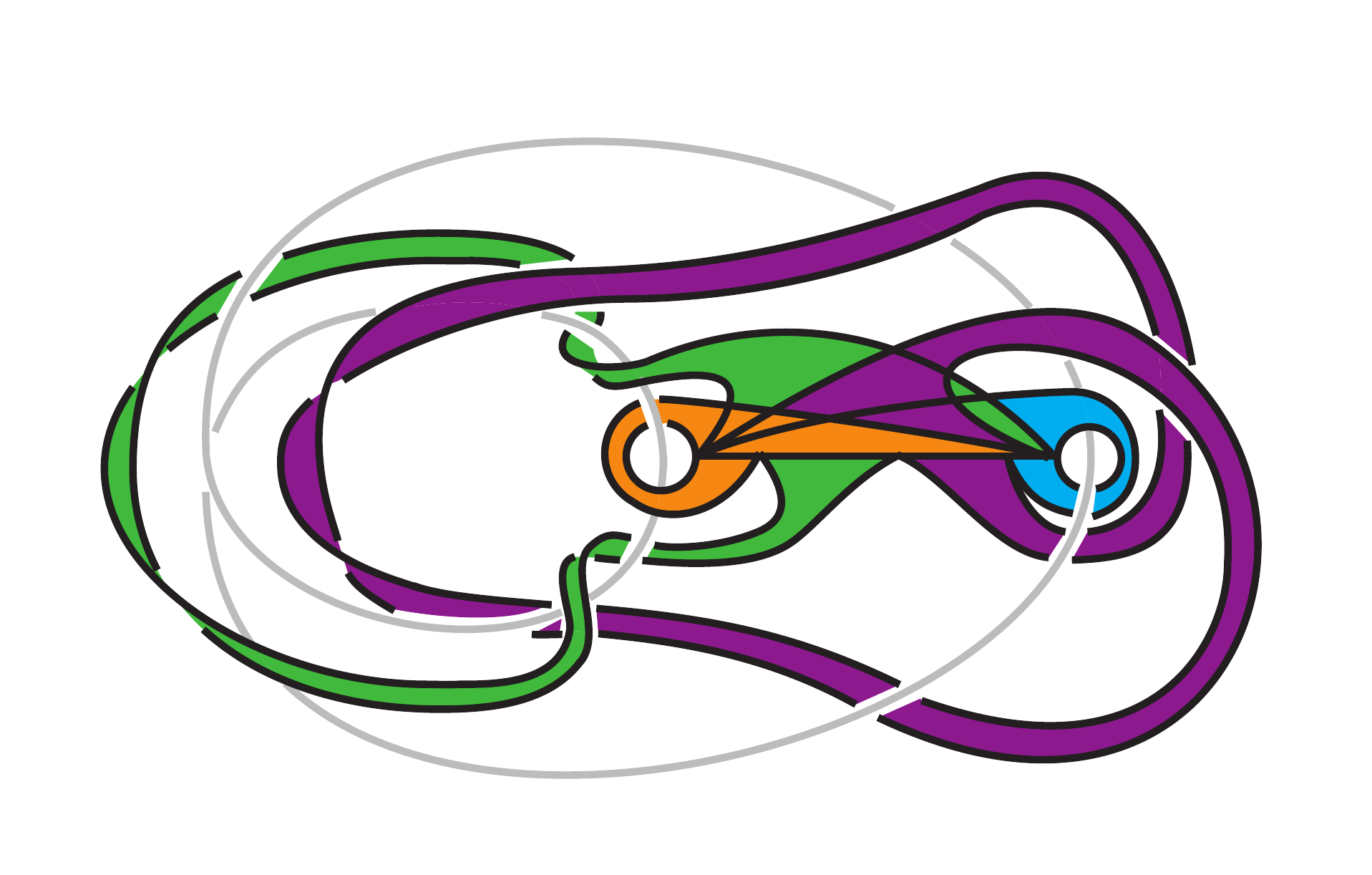}\end{figure}

Note that the template contains the Lorenz template as a subtemplate. This will be the case for the templates we construct for all the $(2,n)$ orbifolds, thus, the templates we construct are extensions of the basic Lorenz template. This is only due to the specific choice of Euler number zero, as other choices of non-zero Euler number will not give rise to a subtemplate with two unlinked ears each of which is the unknot. In a subsequent paper we plan to use the templates constructed here to prove all knots arising as periodic orbits are prime knots.

The case of $(2,n)$ with Euler number zero is also considered in the forthcomoing thesis of Pierre Dehornoy \cite{Pierre}, and any two orbits are found to be negatively linked.

Going back to the case of the modular surface, we note that here too a countable family of lens spaces is obtained in our construction, and it is a remarkable fact that the choice of Euler number zero is the choice producing the Lorenz template, and at the same time is  
the only one which gives the lens space $S^3$.
In particular, our approach gives a direct geometric proof of Ghys' result which makes no use of Weierstrass functions. 

\noindent {\bf Remark.}
The description of the geodesic flow on the modular surface as a flow in the complement of the trefoil knot in $S^3$, and the identification of the modular knots as Lorenz knots poses the following intriguing question. Is it possible to identify a Lorenz-invariant trefoil knot,  in the complement of which the Lorenz flow takes place?  We believe this is indeed the case, and specifically that the invariant curves connecting the two isolated fixed points of the Lorenz flow form a trefoil. We refer to \cite{whats_new} and \cite{Birman_review} for further discussion of this matter. 

\subsection*{Organization of the paper} The paper is organized as folllows: in \S 2 we introduce some preliminaries regarding hyperbolic geometry, in \S 3 after some preliminaries on lens spaces and Seifert fibered spaces we discuss the possible embeddings of the unit tangent bundle, in \S 4 we briefly describe the theory of templates and then construct the templates for the geodesic flows on the $(2,n)$ orbifolds. 

\subsection*{Acknowledgments} The author wishes to thank Professor Yoav Moriah and Professor Amos Nevo for numerous crucial discussions.
\section{Hyperbolic geometry}

\subsection{Hyperbolic unit tangent bundles\label{hutb}}
Let $\H$ be the hyperbolic plane. The tangent bundle $UT_p\H$ at any point $p\in\H$ is a plane. Now let $UT\H$ be the unit tangent bundle of $\H$, with fiber at each point consisting  of all vectors of norm 1, i.e., $S^1$. We call the fiber over a point the circle of directions at that point. As $\H$ is simply connected $UT\H$ is a trivial bundle, and can be described as the set of pointers
$(p,\theta)$ where $p\in\H$ and $0\leq\theta<2\pi$ is an angle representing the unit vector $e^{\theta i}\in S^1$ in the plane $T_p\H$.
Recall that $\text{Isom}_+(\H)\cong{PSL_2(\R)}$, and the stabilizer
$St(x)\subset PSL_2(\R)$ of a point $x\in\H$ consists of all rotations about $x$. $St(x)$ is thus naturally identified with the $S^1$ fiber above $x$ in $UT\H$, and in particular, ${PSL_2(\R)}\cong UT\H$.

Fix a pointer $(i,\pi/2)$ in $UT\H$. The homeomorphism is then explicitly given by 
\bea g:B\mapsto
(B^{-1}(i),dB^{-1}_i(\pi/2)) \eea for any $B\in PSL_2(\R)$. 

Any path $\gamma(t)$ in $\H$ has a natural lift to $UT\H$, by $\gamma(t)\mapsto\tilde\gamma(t)=(\gamma(t),\frac{\dot\gamma(t)}{||\dot\gamma(t)||})$. Let $\gamma(t)$ and $\delta(t)$ be two isotopic paths in $\H$. The isotopy may induce an isotopy between $\tilde\gamma(t)$ and $\tilde\delta(t)$, if the unit tangent vectors at any point $t$ are continuously deformed one to the other by the differential of the isotopy. In this case the isotopy is called a \emph{regular isotopy}, and can be regarded as an isotopy in $UT\H$. 

\subsection{The geodesic flow on $UT\H$\label{Geodesic_flows}}

The geodesic flow $\hat\phi_t$ on $\H$ is defined as the flow taking a pointer in $UT\H$ by parallel transport along the unique geodesic in $\H$ passing through it, a distance $t$.

Let us define for any $t\in\R$, 
$G_t= 
\left(\begin{array}{cc} 
e^{t/2} & 0 \\
0 & e^{-t/2}
\end{array} \right)$. 
$\hat\phi_t$ is conjugated by $g$ defined above to
a flow on $PSL_2(\R)$, given by $\phi_t(B)\mapsto G_t B$. 

Now, for any pointer in $UT\H$ we can consider a horocycle corresponding to the endpoint of the geodesic it defines, $h^-$, and the horocycle corresponding to the geodesic's starting point, $h^+$. We consider these horocycles in the unit tangent bundle, for $h^-$ with all directions pointing toward the endpoint, and for $h^+$ with all directions pointing away from the starting point, as in Figure \ref{horocycles}. 

\begin{figure}[h]\centering \includegraphics[width=6cm]{./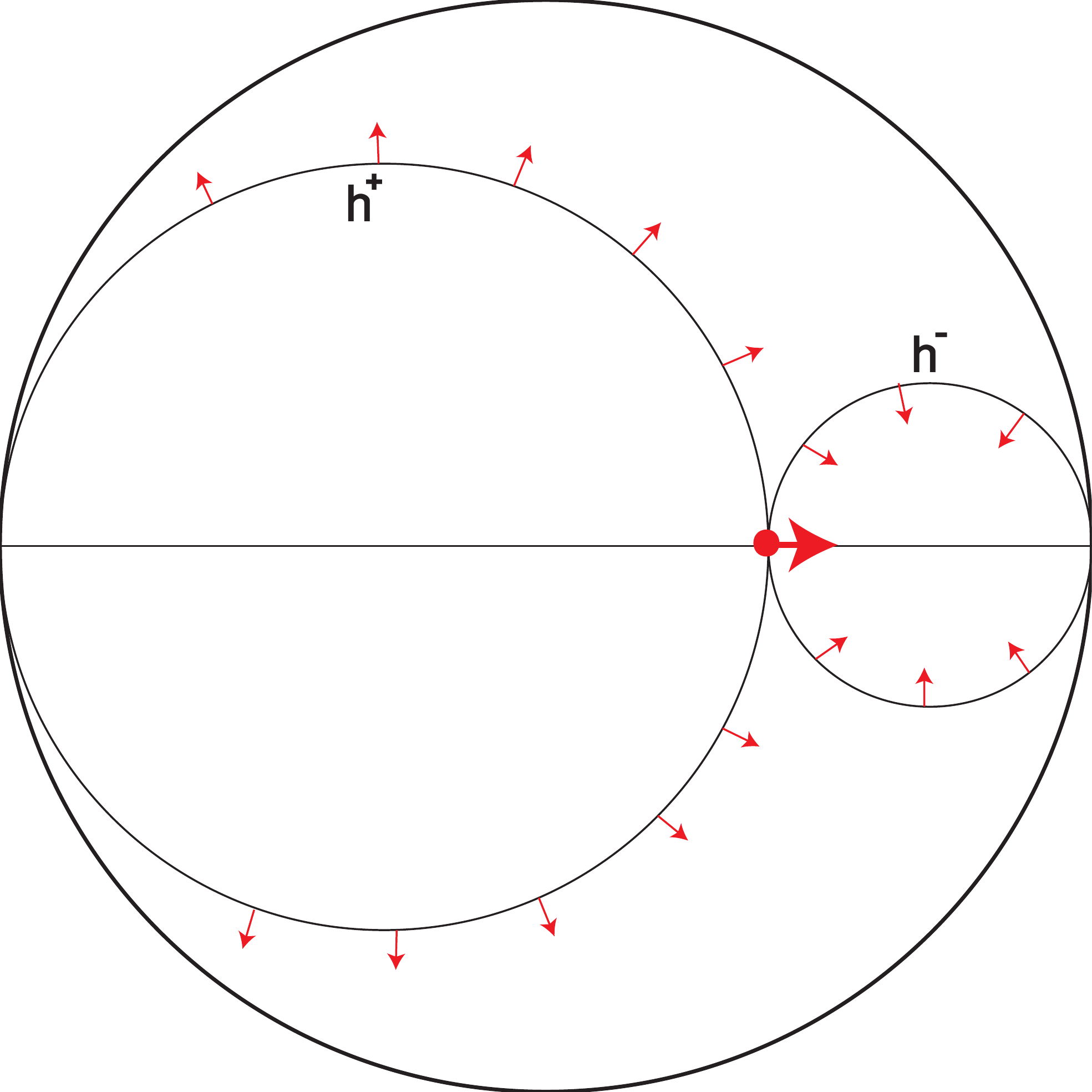} \caption{}\label{horocycles} \end{figure}

The one parameter group
$H^-_s= 
\left(\begin{array}{cc} 
1 & 0 \\
s & 1
\end{array} \right)$ defines the flow along the horocycle $h^-$, and the one parameter group
$H^+_u= 
\left(\begin{array}{cc} 
1 & u \\
0 & 1
\end{array} \right)$ along $h^+$. As $G_tH^-_sG_t^{-1}=H^-_{se^{-t}}$, we see that the images of points on the horocycle $h_-$ approach each other under the geodesic flow, exponentially fast. In the same manner, $G_tH^+_uG_t^{-1}=H^+_{ue^t}$ and so points on $h^+$ diverge exponentially fast under the geodesic flow. We also note that the geodesic and the two horocycles through a point in $UT\H$ are transversal, as can be seen in Figure \ref{horocycles}.


\subsection{The unit tangent bundle of an orbifold\label{orbifolds}}

Let $\O$ be an orbifold of dimension $n$. We will use only the case of dimension 2, but the definitions are the same for any dimension. Take an orbifold atlas for $\mathcal{O}$. Each chart in
the atlas is of the form $(U_i/G_i,\psi_i)$, where each $U_i$ is an open subset of $\R^n$, and $G_i$ is a
finite group acting linearly and faithfully on $\R^n$. 

The tangent bundle of $\O$ is the $2n$ dimensional orbifold defined by the charts
$((U_i,\R^n)/G_i,\tilde\psi_i)$ where $g\in G_i$ acts by $g((p,v))=(g(p),dg_p(v))$. 

If $\mathcal{O}$ is a good orbifold, i.e. a quotient of a manifold $M$ by a properly discontinuous group
action $\Gamma$, the tangent bundle $T\mathcal{O}$ is homeomorphic to $TM/\Gamma$. For the full
definitions see \cite{Montesinos}, page 92.

Let $\Gamma\subset PSL_2(\R)$ be a Fuchsian group acting on $\H$. The quotient $\Gamma\backslash\H$ is a two dimensional orbifold. It inherits a hyperbolic structure from $\H$, and its tangent bundle is $\Gamma\backslash T\H$. 
In this case, it is possible to define also the unit tangent bundle
$\Gamma\backslash UT\H\cong \Gamma\backslash PSL_2(\R)$, as the group acts by isometries. The geodesic flow on the unit tangent bundle is
defined to be $\tilde\phi_t(\overline{B})\mapsto \overline{B}H_t$, namely by projecting the flow $\phi_t$ defined above for $PSL_2(\R)$ via $\Gamma$.


\subsection{The $(n,k)$ Hecke triangle group \label{representation_variety}}

We now turn to the orbifolds which are the subject of our discussion. Consider the $(n,k)$ Hecke triangle group $\Gamma_{(n,k)}:=<v,u|v^k=u^n=e>$. By considering any two points of
distance $d>0$ in $\H$ and taking $v$ to be a rotation of order $2\pi/k$ about the first and $u$ a
rotation of order $2\pi/n$ about the other, we arrive at a representation of $\Gamma_{(n,k)}$ into
$PSL_2(\R)$. Any two representations corresponding to the same distance $d$ are conjugate since the isometry group is distance transitive.

For
any $n,k\in\Z$ there exists a distance $d_0$ for which the image of the representation is discrete, yet
the orbifold $\Gamma\backslash\H$ is of finite volume. The representations for which the distance equals $d_0$ are called lattice representations of $\Gamma$ and we denote the set of all such representations by $L(\Gamma)$.
Denote the orbifold
corresponding to any representation in $L(\Gamma)$ by $\O_{(n,k)}$. This orbifold has two cone points of order $n$ and $k$ and one
cusp. For a distance $d>d_0$ the volume of the orbifold
becomes infinite, that is  \textquotedblleft the cusp has opened\textquotedblright. Denote the orbifold corresponding to $d>d_0$ by $\O^d_{(n,k)}$.

A representation in $L(\Gamma)$ is
determined by one pointer: a cone point in $\H$ of order $k$ and any one of the $k$ directions equally spaced along
the circle of directions at that point, pointing to the neighboring $n-$cone points. We thus regard $L(\Gamma)$ as a set of pointers. Let us fix a representation $\Gamma_0=(i, \pi/2)\in L(\Gamma)$. That is, $\Gamma_0$ has a $k-$cone point at $i$ and an $n-$cone point at distance $d_0$ upwards along the imaginary axis. 
$\Gamma_0$ acts transitively on all $k-$cone points, and by rotations by ${2\pi}/{k}$ around $i$ (and any other $k-$cone point), hence $\Gamma_0$ normalizes itself and $L(\Gamma)\cong PSL_2(\R)/\Gamma_0$. It also true of course that $PSL_2(\R)/\Gamma_0\cong\{\text{pointers in the plane}\}/\Gamma\cong UT\O_{(n,k)}$ as in \S  \ref{orbifolds}. 

We will use in the sequel the following homeomorphism: 
\bea
h: PSL_2(\R)/\Gamma_0\rightarrow L(\Gamma)\\
h:B\cdot \Gamma_0\mapsto(B(i), dB_i(\pi/2)). \eea
Note that although $h$ is very similar to $g$ defined in Section \ref{hutb}, the action of $h$ is on the quotient spaces and in the \textquotedblleft opposite direction\textquotedblright.

\section{The unit tangent bundle}\ 

 As noted in the introduction, the unit tangent bundle to the modular orbifold
$\O_{(2,3)}$ is homeomorphic to the complement of a trefoil in $S^3$. The homeomorphism is implemented by the Weierstrass invariants, and was used by Ghys in \cite{Ghys} to compute the template of the modular flow. However, the fact that unit tangent bundle is a subset of $S^3$ is in fact unique to the modular surface among the surfaces we consider, as we will see below. 

We will give a direct geometric description of the unit tangent bundle to $\O_{(n,k)}$
as a three manifold. We will subsequently describe the template for the geodesic flow embedded therein. We note however, that the best one can attain for the general case is that  for any orbifold $\O_{(n,k)}$, the unit tangent bundle has (infinitely many) embeddings into lens spaces. These are parametrized by the Euler number, and we will see later on that 
the choice of the embedding with Euler number zero gives rise to a template generalizing the Lorenz template. 

This chapter is organized a follows : we begin in \S \ref{Lens} and \S \ref{Seifert} with  brief reminders on lens spaces and Seifert fibered spaces, and in \S 3.3 we describe the structure of $UT\O_{(n,k)}$ as a Seifert fibered space. In \S 3.4 we define a certain vector field on the orbifolds  $UT\O_{(2,k)}$. In \S 3.5 we parametrize the embeddings by the Euler number, and finally in \S 3.6 we
describe
 explicitly the embeddings of $UT\O_{(2,k)}$ into the relevant lens spaces, given by the vector field.
\subsection{Lens spaces \label{Lens}}
The Lens space $L(p,q)$, for integers $0<q<p$, is defined to be the quotient of $S^3$ by the following action of $\Z_p$. Let $S^3$ be the vectors of norm 1 in $\C^2$, then any $w\in\Z_p$ acts by
$w\cdot(z_1,z_2)=(w\cdot z_1,w^q\cdot z_2)$. This action is free, thus the resulting space $L(p.q)$ is a compact three dimensional manifold.

Another description of the same manifold is as follows: Let $\T_1$ and $\T_2$ be two solid tori. Fix a longitude and meridian generators $l_i$ and $m_i$ for $\pi_1(\partial\T_i)$. Then choose an orientation reversing  homeomorphism $h:\partial\T_1\rightarrow\partial\T_2$ so that $h_*(m_1)=pl_2+qm_2$. Then the lens space $L(p,q)$ is given  by gluing $\T_1\cup_h\T_2$.

The homeomorphism between the two representations can be seen by a straightforward identification as follows. Consider the polar parameterization $(r_1,\theta_1,r_2,\theta_2)$ of $\C^2$. For a point in $S^3$, $r_1^2+r_2^2=1$. This gives a parameterization $(r,\theta_1,\theta_2)$ for $S^3 $, $0\leq r\leq1$, $0\leq\theta_1,\theta_2<2\pi$, with obvious identifications for $r=1$ and $r=0$. This is the well known parameterization of $S^3$ by concentric tori sketched in Figure \ref{tori}. The radii are invariant under the $\Z_p$ action and so $r$ is invariant under the action, thus each of the tori is an invariant set. The action on each torus is given by $e^\frac{2\pi mi}{p}\cdot(r,\theta_1,\theta_2)\mapsto(r,\theta_1+\frac{2\pi m}{p},\theta_2+\frac{2\pi mq}{p})$. 
Fix some $0<r_0<1$. This gives a decomposition of $S^3$ to two solid tori. For the inner solid torus, i.e. the union of tori $0\leq r \leq r_0$ a meridian is given by $\theta_1=0$. As can be seen from the action,  the quotient is the torus resulting from $0<\theta_1\leq\frac{2\pi}{p}$, by identifying the two disks in the boundary with a twist by $\frac{2\pi q}{p}$. Thus, $\theta_1=0$ remains a meridian for the quotient torus. This meridian is a longitude for the other torus in $S^3$, $r_0<r\leq 1$. The image of the longitude is a $(p,q)$ curve in the quotient of the outer torus, and the equivalence of the definitions follows. See also \cite{Rolfsen}, \cite{lens}.

\begin{figure}[h]\centering \includegraphics[width=5cm]{./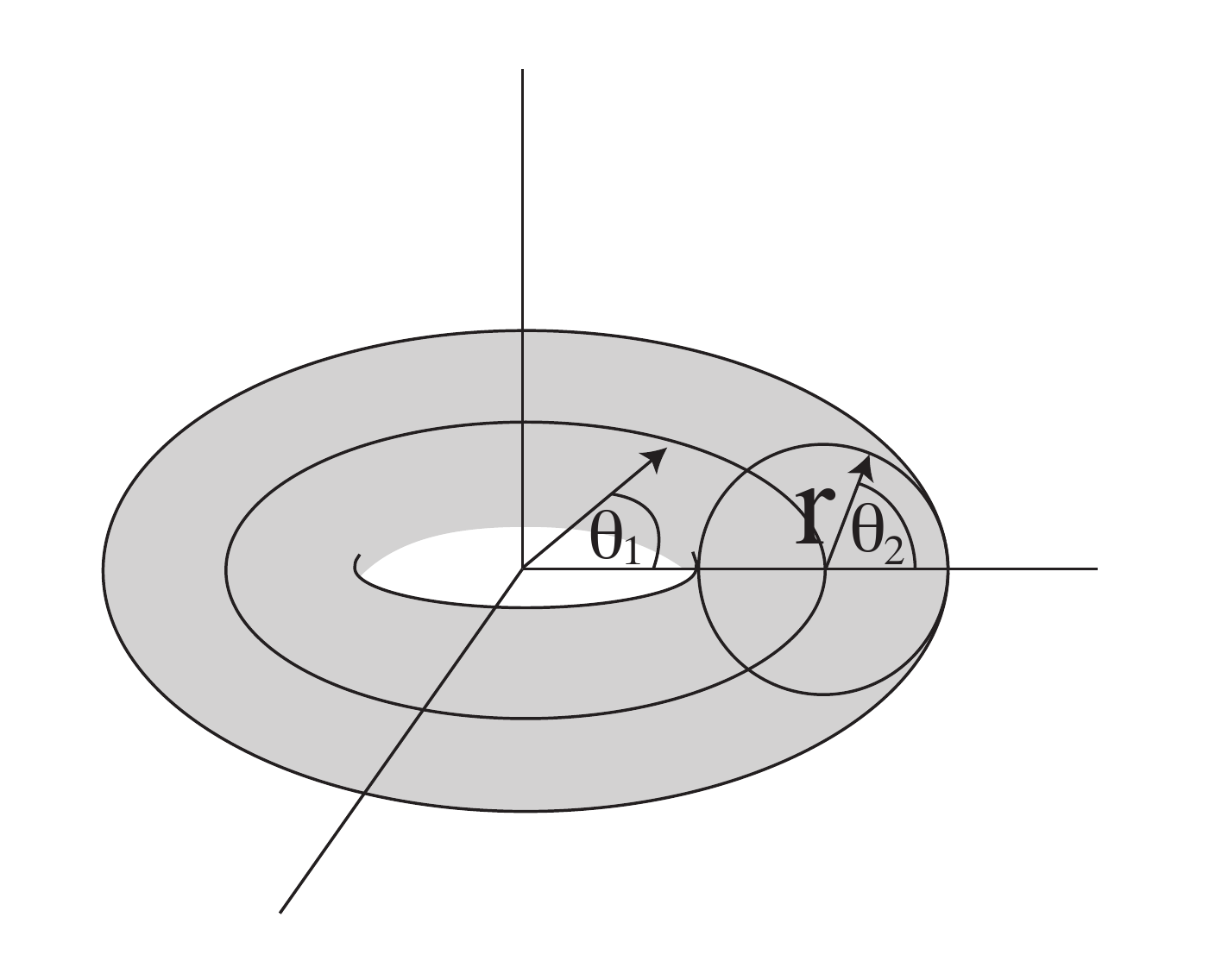} \caption{}\label{tori}
\end{figure}

\subsection{Seifert fiber spaces\label{Seifert}}
 In the section below we recall some terminology and facts about Seifert fiber spaces which can also be found in (\cite{Montesinos}, \cite{Seifert}).
 \begin{Def}
An orientable $3$-manifold is called a \emph{Seifert fiber space} if it is a disjoint union of fibers homeomorphic to $S^1$, such that each fiber has a solid torus neighborhood, foliated by fibers which are not meridians for it.
\end{Def}

\begin{Def}
A \emph{Seifert torus} of type $(\mu,\nu)$, $\mu$ and $\nu$ coprime integers,  is the torus obtained from a fibered cylinder $D^2\times [0,1]$ where the fibers are the lines $x\times [0,1]$, by identifying $(x,1)$ with $(r_{\nu/\mu}(x),0)$ for every $x\in D^2$. $r_{\nu/\mu}:D^2\rightarrow D^2$ is given by a rotation by angle $2\pi\nu/\mu$.
Without loss of generality, we can assume that $\mu>0$ and $0\leq\nu\leq\frac{1}{2}\mu$.
\end{Def}

In addition to a standard choice of a meridian longitude basis for the homology of the boundary of a Seifert torus, one can choose a basis of a fiber, and any simple closed curve on the boundary which intersects any fiber only once. Such a curve is called a \emph{Crossing curve}.

Any fiber $f$ in a general Seifert fiber space, has a neighborhood homeomorphic to some Seifert torus, the homeomorphism taking $f$ to the central fiber. The type of the Seifert torus is uniquely determined (taking the invariants normalized as in the theorem) and the fiber is called singular if $\nu\neq0$. In a $(\mu, \nu)$ fibered torus, any regular fiber $f$ is homeomorphic to $(f_0)^{\mu}$
where $f_0$ is the singular fiber. This will be used later to identify the invariants of specific Seifert tori.

By identifying every fiber to a point one gets a map from any Seifert fiber space $M$ to a two dimensional orbifold $S$ called the \emph{orbit surface}.
$S$ cannot in general be embedded into $M$. Each cone point of $S$ corresponds to a singular fiber of the Seifert fiber space. One can embed a subset of $S$ into $M$ in the following way. First, if $M$ and $S$ are closed, we remove a toral neighborhood of any regular fiber in $M$, and the corresponding disk in $S$, creating one boundary component $J_0$ for $S$. Then we remove a small neighborhood of each cone point of $S$, obtaining the \emph{punctured orbit surface} of $M$ we denote by $S_0$. Removing the corresponding toral neighborhoods of the singular fibers in $M$, we get a three manifold $M_0$ which is a bundle over $S_0$. Seifert, \cite{Seifert}, proves $S_0$ can always be embedded into $M_0$, and the embedding is unique once the homology types of the boundary curves of $S_0$ on $\partial M_0$ are determined.

\begin{thm}[Seifert] Any closed Seifert fiber space is uniquely determined by invariants
\bea
\{O/N,o/n,\, g,\, b\, ;\,  (\mu_1,\nu_1),\hdots,(\mu_s,\nu_s)\},
\eea
where one puts an $O$ if $M$ is orientable and $N$ if not, and $o$ if $S$ is orientable, $n$ if not. $g$ is the genus of $S$, $b$ is the Euler number of $M$. $s$ is the number of the singular fibers, and a toral neighborhood of the singular fiber $f_i$, $1\leq i\leq s$ is a Seifert torus of type $(\mu_i,\nu_i)$.
\end{thm}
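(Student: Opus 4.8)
The plan is to establish both halves of the assertion: that a closed Seifert fiber space $M$ \emph{determines} the listed invariants, and that these invariants \emph{reconstruct} $M$ up to fiber-preserving homeomorphism. For the first half I would pass to the orbit surface $S$ obtained by collapsing each fiber to a point. The orientability symbol $O/N$ is read off from $M$ and $o/n$, $g$ from $S$; these are manifestly homeomorphism invariants. Compactness of $M$ forces only finitely many singular fibers: each has a fibered-solid-torus neighborhood containing no other singular fiber, and these can be taken disjoint, so their number $s$ is finite. The type $(\mu_i,\nu_i)$ of the $i$-th singular fiber is well-defined by the uniqueness statement quoted above, with $\mu_i$ pinned down intrinsically by the remark that a regular fiber is homeomorphic to $(f_0)^{\mu_i}$ and $\nu_i$ then fixed by the normalization $0\le\nu_i\le\frac{1}{2}\mu_i$.

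The substantive invariant is the Euler number $b$. To define it I would remove a fibered neighborhood of each singular fiber together with one fibered neighborhood of a chosen regular fiber, leaving $M_0$, a genuine $S^1$-bundle over the punctured surface $S_0$. Since $S_0$ has nonempty boundary it deformation retracts to a wedge of circles, so this bundle is trivial; equivalently, the embedding result of Seifert gives $S_0\hookrightarrow M_0$ as a section. This section cuts out a crossing curve on each boundary torus, and $b$ is then defined as the obstruction to extending the section over the removed regular-fiber disk $J_0$, recorded as the coefficient with which the crossing curve must wrap around the fiber when the final solid torus is glued back.

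The delicate point, which I expect to be the main obstacle, is showing $b$ is well-defined, independent of the choice of removed regular fiber and of the section. The essential phenomenon is that twisting may be traded between the local data at the singular fibers and the global framing: normalizing each $\nu_i$ into $[0,\frac{1}{2}\mu_i]$ forces a compensating integer contribution into $b$, and one must verify that this bookkeeping assembles consistently. Concretely I would check that any two regular fibers are joined by a fiber-preserving isotopy, so the choice of which to remove is immaterial, and that two sections over $S_0$ differ by a map $S_0\to S^1$ whose effect on the gluing coefficients is exactly absorbed into the integer $b$, leaving $b$ unchanged.

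For the converse I would build a model space directly from the data: form $S_0\times S^1$, glue a Seifert torus of type $(\mu_i,\nu_i)$ over each puncture, and glue a final solid torus over $J_0$ so that the crossing curve maps to a curve wrapping $b$ times in the meridian direction relative to the fiber. A direct check shows this yields a closed Seifert fiber space with precisely the prescribed invariants, proving existence. For uniqueness, given two closed Seifert fiber spaces with identical invariants, I would realize both by this model and produce a fiber-preserving homeomorphism built on the product part from any homeomorphism of $S_0$ matching genus, orientability and puncture data, and then extended across each glued solid torus: the gluing homeomorphism of each boundary torus is determined by the requirements to send fiber to fiber and to realize the recorded homology types $(\mu_i,\nu_i)$ and $b$, so the extension exists and is essentially forced.
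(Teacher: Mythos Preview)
The paper does not prove this theorem. It is stated as a classical result attributed to Seifert and cited to his 1933 paper; the only content the paper supplies is the paragraph immediately following the statement, which sketches the \emph{construction} of $M$ from the invariants (take $S_0\times S^1$, glue in the singular fibered tori so that the boundary curves of $S_0$ are crossing curves, and close up by gluing a meridian of the last solid torus to $c_0-b\cdot f$). There is no argument in the paper for well-definedness of the invariants or for uniqueness of the resulting manifold.

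Your proposal therefore goes considerably beyond what the paper does. The ``converse'' half of your outline --- building the model space from the data --- matches the paper's sketch essentially verbatim. The other half, in particular your treatment of the Euler number $b$ (independence from the choice of removed regular fiber and of section, and the bookkeeping that trades integer twists between the local $\nu_i$ and the global $b$), is not addressed in the paper at all; if you want a self-contained proof you would need to supply those details yourself or defer to Seifert's original paper or a standard reference such as Montesinos. As an outline of the classical argument your proposal is sound, but be aware that the uniqueness step requires slightly more care than you indicate: matching two Seifert fiberings with the same invariants by a fiber-preserving homeomorphism needs the observation that the gluing maps on the boundary tori are determined only up to fiber-preserving isotopy, and that such isotopies extend over the solid tori.
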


Given the invariants, the space $M$ can be constructed as follows. Begin with a surface of genus $g$ with $s+1$ punctures. This is homeomorphic to $S_0$. Take the trivial circle bundle over $S_0$, This is a fibered space with $s+1$ toral boundary components. The boundary curves of $S_0$ on each of the boundary tori of $M_0$ are crossing curves for these tori. So for each boundary torus this determines a basis of a crossing curve $c_i$, $1\leq i\leq s$, and a fiber $f$. Thus the invariants $\mu_i,\nu_i$ for each singular torus uniquely determine a gluing of this singular torus to $M_0$ such that fibers match. Now there remains a single boundary component, with a given crossing curve $c_0$. We glue in a $(1,0)$ fibered torus by gluing its meridian to a $c_0-b\cdot f$ curve. 

In the sequel we will consider open Seifert fiber spaces, having a single toral boundary. As by gluing in a Seifert torus one obtains a closed Seifert fiber space, it follows from Seifert's theorem that these manifolds are uniquely determined by the above invariants, excluding the Euler number $b$. $b$ remains undetermined as the parameter determining the gluing, and parametrizes the $\aleph_0$ closed Seifert fiber space into which the manifold can embed without adding singular fibers.
 
On each Seifert fiber space, one can define an $S^1$ action, moving each point along the fiber containing it. Consider the action on a neighborhood of a singular fiber, which is a $(\mu,\nu)$ fibered torus. Take a meridional disk $D$ for this torus, thus the torus is given by $D\times[0,1]$ with the identification by a $2\pi\nu/\mu$ rotation. 
Choose a point $x_0$ on the meridian $\partial D$. There are $\mu$ intersection points of the orbit of $x_0$ (i.e. the fiber containing $x_0$) with $\partial D$. We order these points as $x_0,\hdots, x_{\mu-1}$, by their order along the meridian.
The flow takes $x_0=x_0\times 0$ along $[0,1]$ to a point $x_0\times1$ identified under the rotation with $x_{\nu}$. Thus, by knowing the action one can derive both $\mu$ (by the number of points in the orbit), and $\nu$. This will be useful later on.

\subsection{The structure of the unit tangent bundle} 

\begin{thm} The unit tangent bundle to $\O_{(n,k)}$ is a Seifert fiber space consisting of two Seifert tori of invariants $(k,1)$
and $(n,1)$. \end{thm}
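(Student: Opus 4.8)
The plan is to exhibit the circle action on $UT\O_{(n,k)}$ coming from rotation of directions, identify the resulting orbit surface as the orbifold itself, and then compute the Seifert invariant of the two singular fibers by a purely local model at each cone point. First I would use the identification $UT\O_{(n,k)}\cong\Gamma\backslash PSL_2(\R)$ from \S\ref{orbifolds}--\S\ref{representation_variety}. Writing $K\subset PSL_2(\R)$ for the compact subgroup of rotations fixing $i$, the fibration $PSL_2(\R)\to\H$ of \S\ref{hutb} has the left cosets of $K$ as fibers, and the right action of $K$ rotates the circle of directions over each point. Since this right action commutes with the left action of $\Gamma$, it descends to an $S^1$-action on $\Gamma\backslash PSL_2(\R)$ whose orbits are exactly the circles of directions; this is a Seifert fibration with orbit surface $\Gamma\backslash\H=\O_{(n,k)}$. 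As this orbifold has precisely two cone points, of orders $n$ and $k$, and a single cusp, the fibration has exactly two singular fibers, lying over the cone points, while the cusp produces a toral end and no singular fiber. The whole content of the theorem is therefore the computation of the two local invariants.

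Next I would write down the local model near a cone point of order $m\in\{n,k\}$. Lifting to $\H$, a preimage $p$ of the cone point has stabilizer $\langle w\rangle\cong\Z_m$ in $\Gamma$, where $w$ is an elliptic rotation by $2\pi/m$ about $p$. A fibered neighborhood of the singular fiber is then $(\tilde U\times K)/\langle w\rangle$, where $\tilde U$ is a small $w$-invariant disk about $p$. The essential geometric input is that $w$, being a conformal (orientation-preserving) isometry, acts near $p$ like $z\mapsto e^{2\pi i/m}z$, so it rotates the base disk $\tilde U$ and the direction circle $K$ by the \emph{same} angle $2\pi/m$; in coordinates $(z,\psi)$ with $z\in\tilde U$ and $\psi\in K=\R/2\pi\Z$ the action is $w\cdot(z,\psi)=(e^{2\pi i/m}z,\ \psi+2\pi/m)$.

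Finally I would read off the Seifert invariants. Because $w$ shifts $\psi$ by $2\pi/m$, the slab $\tilde U\times[0,2\pi/m]$ is a fundamental domain, and the quotient is a fibered solid torus $\tilde U\times[0,1]$, with fibers the direction segments $\{z\}\times[0,1]$, whose top and bottom are glued by the residual rotation $(z,1)\sim(e^{-2\pi i/m}z,0)$. Comparing with the definition of a Seifert torus in \S\ref{Seifert} gives multiplicity $\mu=m$ and rotation number $\nu/\mu\equiv-1/m$. Equivalently, via the $S^1$-action criterion recalled in \S\ref{Seifert}: a regular fiber meets the meridian disk $m$ times, so $\mu=m$, and the first-return map rotates the meridian by $-2\pi/m$, carrying $x_0$ to $x_{m-1}$, so $\nu\equiv-1\pmod m$. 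Normalizing to $0\le\nu\le\half\mu$ replaces $(m,m-1)$ by $(m,1)$. Taking $m=k$ and $m=n$ produces the two singular fibers of types $(k,1)$ and $(n,1)$, as claimed.

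The step I expect to be the main obstacle is this last extraction of $\nu$: the multiplicity $\mu=m$ is immediate from the order of the isotropy, but the value of $\nu$ depends on orientation conventions (the sense of the $K$-action, the ordering of the meridian intersection points, and the normalization $0\le\nu\le\half\mu$), and one must be careful that the base rotation and the fiber rotation are genuinely by the same angle -- it is precisely this coincidence that forces $\nu\equiv\pm1$ and hence the normalized value $1$. I would verify these conventions explicitly against \S\ref{Seifert} to rule out an off-by-sign error that could spuriously produce a different residue.
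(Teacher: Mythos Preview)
Your argument is correct and shares the paper's overall strategy: identify the Seifert structure via the direction-rotating $S^1$-action, locate the singular fibers over the cone points, and extract $(\mu,\nu)$ from the isotropy. The execution differs in two places worth noting. For $\mu$, the paper argues geometrically---decomposing the orbifold into two disks, observing that small tangent loops away from cone points are regularly isotopic (hence at most one singular fiber per disk), and then lifting to the universal cover to see $f\cong f_0^{\,m}$---whereas you read $\mu=m$ straight off the isotropy order. For $\nu$, the paper transports the question to the representation variety $L(\Gamma)$ of \S\ref{representation_variety} and runs the $S^1$-action there (rotation of lattices about $i$), finding that $x_0$ is carried to $x_1$, hence $\nu=1$ on the nose; your local slab model produces the gluing rotation $e^{-2\pi i/m}$ and hence $\nu\equiv -1$, which you then normalize. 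The sign discrepancy you anticipated is genuine but harmless: the two meridian disks (the paper's ``upward direction'' slice in $L(\Gamma)$ versus your $\{\psi=0\}$ slice) differ by a reflection, and the normalization $0\le\nu\le\half\mu$ identifies $(m,m-1)$ with $(m,1)$. Your route is the more economical of the two; the paper's buys a concrete picture of the meridian in $L(\Gamma)$ that it reuses later when embedding $\mathcal{V}$.
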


\begin{proof} We divide the proof into several steps. 

Step 1 : First we prove the unit tangent bundle is a union of two Seifert tori. For this, divide the orbifold itself into two disks, each a neighborhood of one of the cone points, so that they intersect
along a line as in Figure \ref{division}.

\begin{figure}[h]\centering \includegraphics[width=5cm]{./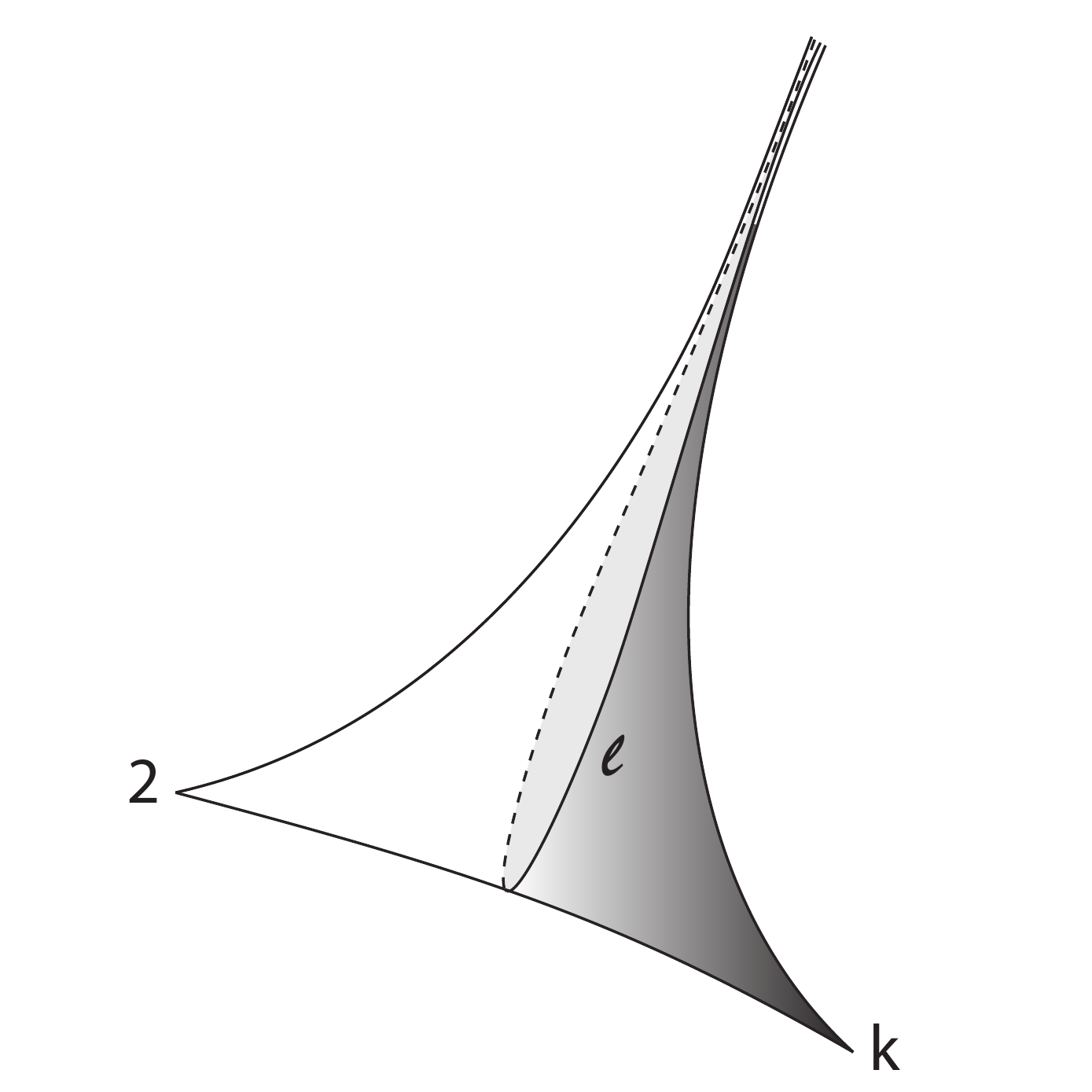} \caption{}\label{division}
\end{figure}

The unit tangent bundle to each of the disks is an  $S^1$ bundle over the disk, i.e., a fibered solid
torus. It remains to prove that only one fiber in each torus is singular. 

For a two dimensional orbifold, a path making a small loop around a point together with the tangent
direction at every point, is isotopic to the fiber corresponding to that point (we 
orient both counter clockwise).  Any two small loops on the surface are regularly isotopic, unless one of
them encloses a cone point, hence all fibers except at the cone points are isotopic and the torus contains at most one singular fiber, the one corresponding to the cone point. We denote the singular fiber corresponding to the $n$-cone point by $\alpha$ and the one corresponding to the $k$-cone point by $\beta$.

As in \ref{Seifert}, such a torus is determined up to homeomorphism  by two natural numbers $(\mu, \nu)$, $0\leq\nu\leq\mu/2$. Denote the invariants of the torus which is the unit tangent bundle to a neighborhood of the $n$-cone point $(\mu_n,\nu_n)$, and the invariants of the other torus by $(\mu_k,\nu_k)$. We now compute these invariants in two further steps.

Step 2 : computing $\mu_n$ and $\mu_k$. Consider the universal covering space of the orbifold. This is a $2-$plane and in it any two loops are isotopic. Hence, a lift of the singular fiber $f_0$ corresponding to the $n$-cone point (which projects to the singular fiber to the order $n$) and a lift of any of the regular fibers $f$ are isotopic. Thus in the projection, $f\cong f_0^{n}$. In the same way, the $k-$singular fiber to the $k^{th}$ order is isotopic to a regular fiber.
Hence as in \ref{Seifert}, the two unit tangent bundles to the cone points
neighborhoods are two tori  $(k, \nu_k)$ and  $(n, \nu_n)$.

Step 3 : computing $\nu_n$ and $\nu_k$. For the case $n=2$ which will be our main interest it is obvious that $\nu_n$ must equal 1 by the
above normalization. It is actually true in general that both $\nu_k$ and $\nu_n$ equal 1. We verify this following Montesinos \cite{Montesinos} by considering the $S^1$ action on the unit tangent bundle as a Seifert fiber space. For convenience, we analyze the action while viewing the unit tangent bundle as the representation variety $L(\Gamma)$, as described below.

By definition, the $S^1$ action consists of flowing along the fibers. The action on $UT\H$ is given by rotating all pointers by
a fixed angle while fixing their base points in $\H$. 
Via the homeomorphism $g:B\mapsto(B^{-1}(i),dB_i^{-1}(\pi/2))$ defined in Section \ref{hutb}, The action of $\theta\in S^1$ on $PSL_2(\R)$ is given by $\theta: B\rightarrow K_{\theta}\cdot B$, where $K_{\theta}$ is the rotation about $i$ by $\theta$.
The $S^1$ action in which we are interested on $PSL_2(\R)/\Gamma_0$ is the quotient of the action on $UT\H$. 
We next make use the homeomorphism $h$ described in \ref{representation_variety} between $PSL_2(\R)/\Gamma_0$ and $L(\Gamma)$, $h:B\cdot \Gamma_0\mapsto(B(i), dB_i(\pi/2))$. Now, for $(p,\alpha)\in L(\Gamma)$ choose $B\in PSL_2(\R)$ such that  
$(B(i), dB_i(\pi/2))\sim(p,\alpha)$. The $S^1$ action is thus given by $\theta: (p,\alpha)\mapsto (K_\theta\cdot B(i),dK_\theta\cdot dB_i(\pi/2))\sim(K_\theta (p),\alpha+\theta)$. i.e., the $S^1$ action is given by rotations of all lattices in $L(\Gamma)$ about $i$.
The action takes $\Gamma_0$ back to itself for
$\theta=\frac{2\pi}{k}$, while any lattice with a vertex very close to $i$ will return to itself only for $\theta=2\pi$. Thus, the $S^1$ orbit of $\Gamma_0$ (all lattices with a $k$-cone point at $i$) is the singular
fiber of order $k$. Take a small neighborhood $B_{\epsilon}(i)$ of $i$ in the plane (containing no other
vertices of $\Gamma_0$). The set of lattice representations with a $k-$cone point within $B_{\epsilon}(i)$ is a toral neighborhood
of the $k$-singular fiber. A meridional disk for this torus is the set of such lattices with (say) an upward direction (pointing to the nearest neighbor), and the set of such lattices with a cone point and an upward direction on the circle $S_{\epsilon}(i)$ 
is a meridian. 

Fix a point $(x_0,\frac{\pi}{2})$ on
the meridian, and mark the $k$ points equally spaced along  $S_{\epsilon}(i)$ including $x_0$ by $\{x_0,x_1,...,x_{k-1}\}$, ordered counterclockwise. The lattices corresponding to $\{(x_0,\frac{\pi}{2}),(x_1,\frac{\pi}{2}),...,(x_{k-1},\frac{\pi}{2})\}$ are in the same $S^1$ orbit, and it is the order in which they are transformed to one another which determines $\nu_k$, as in \ref{Seifert}.

The rotation by ${2\pi}/{k}$ takes $(x_0,\frac{\pi}{2})$ to  $(x_1,\frac{\pi}{2}+\frac{2\pi}{k})\sim(x_1,\frac{\pi}{2})$. Hence $\nu_k=1$, and in the same manner $\nu_n=1$.

Concluding, the unit tangent bundle is the union of the Seifert tori $(k,1)$ and $(n,1)$. \end{proof}

There is of course some identification on the boundaries of these two tori. This identification is addressed in the following theorem.

\begin{thm} The unit tangent bundle of $\O_{(n,k)}$ can be embedded in the lens space $L(n+k-nkc, 1-nc)$
for any $c\in\Z$. \end{thm}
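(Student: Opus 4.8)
The plan is to realize each target lens space as a Dehn filling of $UT\O_{(n,k)}$ and to identify that filling using the Seifert data supplied by the preceding theorem. By that theorem $UT\O_{(n,k)}$ is a Seifert fiber space whose orbit surface is a disk carrying two cone points of orders $n$ and $k$ — a disk rather than a sphere because the cusp of $\O_{(n,k)}$ contributes one toral boundary component $\T_\infty$ rather than a cone point — and whose two singular fibers have types $(n,1)$ and $(k,1)$. Thus $UT\O_{(n,k)}$ is an open Seifert fiber space with the single toral boundary $\T_\infty$. Following the discussion in \S\ref{Seifert}, I would close it up by gluing a trivially fibered $(1,0)$ solid torus along $\T_\infty$; the only freedom, if no new singular fiber is to be introduced, is how many fibers to add to the crossing curve $c_0$ when prescribing the image of the meridian, i.e. the Euler number. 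Letting this integer parameter be $c$ and calling the resulting closed manifold $M_c$, we obtain a closed Seifert fiber space over $S^2$ with exactly the two exceptional fibers $(n,1)$ and $(k,1)$, and $UT\O_{(n,k)}\hookrightarrow M_c$ as the inclusion.

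First I would argue that every such $M_c$ is a lens space. Writing the orbit sphere as $S^2=D_1\cup D_2$, two disks meeting along a circle $\gamma$, with $D_1$ carrying the order-$k$ cone point and $D_2$ the order-$n$ cone point, and absorbing the Euler-number twist into the fibered solid torus over $D_1$, the manifold $M_c$ becomes a union of two fibered solid tori $\T_1\cup_T \T_2$ glued along the single torus $T$ lying over $\gamma$. A union of two solid tori glued along their boundary is precisely the gluing description of a lens space recalled in \S\ref{Lens}, so $M_c\cong L(p,q)$ for suitable $p,q$ which I would extract from the fibre data.

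The computation of $(p,q)$ is then linear algebra on $H_1(T)$. Using the crossing curve $h$ and the fiber $f$ as a basis, the meridian of a type-$(\mu,\nu)$ fibered solid torus is $\mu h+\nu f$, the relation underlying \S\ref{Seifert}. Absorbing the twist makes $\T_1$ of type $(k,1-kc)$ while $\T_2$ stays of type $(n,1)$, and the two crossing curves differ by orientation across $\gamma$, so $h_2=-h_1$. With $h=h_1$ the two meridians are $m_1=k h+(1-kc)f$ and $m_2=-n h+f$, giving
\[
p=\det\begin{pmatrix} k & 1-kc \\ -n & 1\end{pmatrix}=n+k-nkc .
\]
Choosing the longitude $l_1=-h+cf$ of $\T_1$, which satisfies $m_1\cdot l_1=1$, and rewriting $m_2$ in the basis $(m_1,l_1)$ gives $m_2=(1-nc)\,m_1+p\,l_1$, so that the gluing coefficient is $q\equiv 1-nc\pmod p$, exactly as claimed. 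As a check, for $(n,k)=(2,3)$ and $c=1$ this yields $L(\pm1,\,\cdot\,)=S^3$, recovering the trefoil-complement picture of the modular surface.

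The genuinely delicate point, and the one I would treat most carefully, is the orientation and framing bookkeeping: fixing the sign in the meridian relation $m=\mu h+\nu f$, the identification $h_2=-h_1$ of the crossing curves across $\gamma$, and the sign of the Euler-number twist, so that one obtains $n+k-nkc$ rather than $n+k+nkc$. The value of $q$ is in any case only well defined up to the standard lens-space equivalences $q\mapsto q^{\pm1},-q\pmod p$; indeed, absorbing the twist into the order-$n$ torus instead produces $q=1-kc$, and since $(1-nc)(1-kc)\equiv 1\pmod{n+k-nkc}$ these are mutually inverse and describe the same lens space. Hence no ambiguity remains in the statement $L(n+k-nkc,\,1-nc)$, valid for every $c\in\Z$.
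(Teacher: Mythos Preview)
Your proof is correct and reaches the same conclusion, but it is organized differently from the paper's argument. The paper works directly with the decomposition of $\O_{(n,k)}$ into two disks (Figure~\ref{division}): the unit tangent bundle is then literally two solid tori of types $(k,1)$ and $(n,1)$ glued along the annulus of fibers over the common arc $l$, and the various completions to closed lens spaces are obtained by extending this annulus gluing to a full boundary identification. The paper then simply lists all determinant~$-1$ matrices in $GL(2,\Z)$ sending an $(n,1)$ curve to a $(k,1)$ curve, obtaining the one-parameter family $M_{n,k,c}$, and reads off $(p,q)=(n+k-nkc,\,1-nc)$ from the image of the meridian. You instead phrase the completion as a Dehn filling at the cusp torus with Euler-number parameter~$c$, invoke the Seifert classification to conclude that the resulting closed space over $S^2$ with two exceptional fibers is a lens space, and then carry out the homology computation in the (crossing curve, fiber) basis after absorbing the Euler twist into one fibered torus. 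Your route is the standard modern Seifert-theoretic packaging and has the advantage of making the role of $c$ as Euler number explicit from the outset (the paper only identifies $b=c-1$ later, in \S\ref{Euler}); the paper's route is more elementary and self-contained, since it avoids the absorption move and the crossing-curve formalism, reducing everything to a single matrix equation in the longitude--meridian basis. Both arguments are essentially the same linear-algebra computation on $H_1$ of the splitting torus, viewed through different bases.
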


\begin{proof} Consider again the two disks into which $\O_{(n,k)}$ is divided in the previous proof, depicted in Figure \ref{division}. These disks intersect along a single (open) segment,  denoted $l$, included in each of their boundaries.
The corresponding tori, which are the unit tangent bundles to each of the disks therefor each include the unit tangent bundle to $l$ on their boundaries. 

By identifying the fibers lying above $l$ with one another, one arrives at a gluing of the two tori $(k,1)$ and $(n,1)$ along an annulus. It is this gluing that yields the geometry of $UT\O_{(n,k)}$.
The annulus is a union of regular fibers. The gluing must identify a regular fiber on one boundary torus to a regular fiber on the other boundary torus. This is true as the fiber above the same point on $l$ appears as a regular fiber in each of the boundaries. 

Consider all  orientation reversing homeomorphisms between the boundary tori. Any such homeomorphism is
given by matrix multiplication, by a matrix in $GL(2, \Z)$ with determinant $-1$. It is easy to compute
that all such matrices satisfying that a $(n,1)$ curve is glued to a $(k,1)$ curve are \bea
M_{n,k,c}=
\left(\begin{array}{cc} 
kc-1 & n+k-nkc \\
 c       & 1-nc \\ 
\end{array}\right) \eea
 for any $c\in\Z$.  Two
solid tori glued along their boundaries so that the meridian of one is glued to a $(p,q)$ curve on the
other results in the lens space $L(p,q)$, as in Section \ref{Lens}. Hence the unit tangent bundle can be embedded into the
$\aleph_0$ lens spaces $L(n+k-nkc, 1-nc)$ as required.
\end{proof}

\begin{cor} $UT\O_{(n,k)}$ can be embedded into $S^3$ if an only if $\{n,k\}=\{2,3\}$, namely, for $\Gamma_{(2,3)}=PSL_2(\Z)$.
\end{cor}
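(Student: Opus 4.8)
The plan is to extract the answer from the preceding theorem and reduce the question to an elementary Diophantine condition. Recall that $\pi_1(L(p,q))\cong\Z/p\Z$, so a lens space is homeomorphic to $S^3$ exactly when $p=\pm1$. Since the theorem embeds $UT\O_{(n,k)}$ in $L(n+k-nkc,1-nc)$ for every $c\in\Z$, the first step is to decide for which $n,k\ge2$ there is an integer $c$ with $n+k-nkc=\pm1$.

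I would then solve this directly. Writing the condition as $nkc=n+k\mp1$ and using $n,k\ge2$, the right-hand side is positive, so necessarily $c\ge1$ and hence $nk\le n+k\mp1$. The inequality $nk\le n+k-1$ rearranges to $(n-1)(k-1)\le0$, impossible for $n,k\ge2$, so the value $+1$ never occurs. The inequality $nk\le n+k+1$ rearranges to $(n-1)(k-1)\le2$, leaving only the pairs $(n,k)=(2,2)$ and $\{n,k\}=\{2,3\}$; substituting back, $4c=5$ has no solution whereas $6c=6$ gives $c=1$. Hence an integer $c$ with $|n+k-nkc|=1$ exists if and only if $\{n,k\}=\{2,3\}$, and then $c=1$ produces $L(-1,1-nc)\cong S^3$. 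This already gives the ``if'' direction.

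For the ``only if'' direction I must ensure that no embedding of $UT\O_{(n,k)}$ into $S^3$ escapes the theorem's family. Here I would use that $UT\O_{(n,k)}$ is Seifert fibered over a disc carrying exactly two exceptional fibres, so its cusp torus is incompressible. Given any embedding into $S^3$, the complement is a compact manifold with the same torus boundary; since every torus in $S^3$ bounds a solid torus on at least one side and this torus cannot compress into $UT\O_{(n,k)}$, the complement must be a solid torus. Thus $S^3$ is realized as a Dehn filling of the cusp. A filling along the fibre slope yields a reducible manifold, a connected sum of two lens spaces with summand orders $n,k\ge2$, hence never $S^3$; a filling introducing a third exceptional fibre presents $S^3$ as a Seifert fibration over $S^2(n,k,m)$ with $n,k,m\ge2$, which has nontrivial fundamental group and so is not $S^3$. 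The only remaining fillings add a regular fibre, and these are precisely the lens spaces $L(n+k-nkc,1-nc)$ of the theorem, returning us to the computation above.

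The routine part is the number theory; the genuine obstacle is the ``only if'' direction, where one must confine every $S^3$-embedding to the countable family produced by the theorem. The essential inputs are the incompressibility of the cusp torus, the atoroidality of $S^3$, and the classification of Seifert fibrations of $S^3$ (at most two exceptional fibres), which together force every such embedding to be a regular-fibre filling and hence to satisfy the solved condition $|n+k-nkc|=1$.
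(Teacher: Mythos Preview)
Your argument is correct, and for the ``if'' direction it matches the paper's exactly: both reduce to the Diophantine condition $n+k-nkc=\pm1$ and verify that $\{n,k\}=\{2,3\}$ with $c=1$ is the only solution with $n,k\ge2$.

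For the ``only if'' direction you actually do more work than the paper. The paper simply asserts that the condition $|n+k-nkc|=1$ is necessary, and then disposes of the remaining Dehn fillings in one line by citing Seifert for the fact that adding a further singular fibre cannot yield $S^3$. In particular the paper never justifies why an arbitrary embedding of $UT\O_{(n,k)}$ into $S^3$ must arise as a Dehn filling of the cusp at all. You supply that step: incompressibility of the boundary torus (because the base orbifold is a disc with two cone points of order $\ge2$) together with Alexander's theorem in $S^3$ forces the complementary piece to be a solid torus, hence every embedding is a Dehn filling. You then treat all three filling types---the fibre slope (reducible, a connected sum of nontrivial lens spaces), slopes adding a third exceptional fibre (a Seifert fibration over $S^2(n,k,m)$ with $n,k,m\ge2$, never $S^3$), and the regular-fibre fillings (the lens-space family of the theorem). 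The paper's proof only addresses the last two cases, and the middle one only by citation. So your route is the same in spirit but closes a gap the paper leaves open; what you gain is a self-contained ``only if'' argument, at the cost of invoking Alexander's theorem and the classification of Seifert structures on $S^3$ rather than a single reference to Seifert.
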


\begin{proof}
First, the lens space $L(p,q)$ is homeomorphic to $S^3$ if and only if $p=\pm1$. Indeed, recall that 
$L(0,1)\cong S^2\times S^1$, $L(p,q)\cong L(-p,q)$ and for $p>1$ $\pi_1(L(p,q))\cong \Z/p$, (see
\cite{Rolfsen} page 234). Thus the unit tangent bundle can be embedded into $S^3$ if and only if there
exists an integer $c$ such that $k+n-knc=\pm1$.  By direct calculation, this can happen only for the case
$(2,3)$ of the modular surface, or for orbifolds with less than two cone points.

As adding more singular fibers can never result in $S^3$, see \cite{Seifert}, this proves the claim.
\end{proof} 

\subsection{Defining the vector field $\mathcal{V}$}

Our next goal is to obtain an homeomorphism from pointers on the orbifold
into any of the relevant lens spaces computed above.  We will now assume that $n=2$, and we do this in two steps. The first consists of defining a particular vector field $\mathcal{V}$ on the orbifold minus a neighborhood of the cone points (in the present section). This gives an embedding of its domain, which is a pair of pants, into the unit tangent bundle. The second consists of completing this embedding to the different possible embeddings of $UT\O_{(n,k)}$ into lens spaces (in the two following sections). The completion is obtained by glueing in first the missing cone point neighborhoods, then glueing in  the torus corresponding to the cusp. The last gluing can be done in $\aleph_0$ different ways, according to the Euler number of the resulting closed three manifold.

We next define a vector field $\mathcal{V}$ on the orbifold with some small neighborhoods
of the cone points removed as in Figure \ref{vector field}:
Take the orbifold by a homeomorphism to the unit 2-sphere punctured at the north pole, taking the cusp to
the north pole and the $k-$cone point to the south pole. Pull back the vector field pointing west at each
point. This can't define a vector field on the orbifold including the neighborhood of
the $2-$cone point, as can be seen from Figure \ref{vector field}. Thus the domain of the vector field is a pair of pants, which we denote by $A$.

\begin{figure}[h]\centering \includegraphics[width=12cm]{./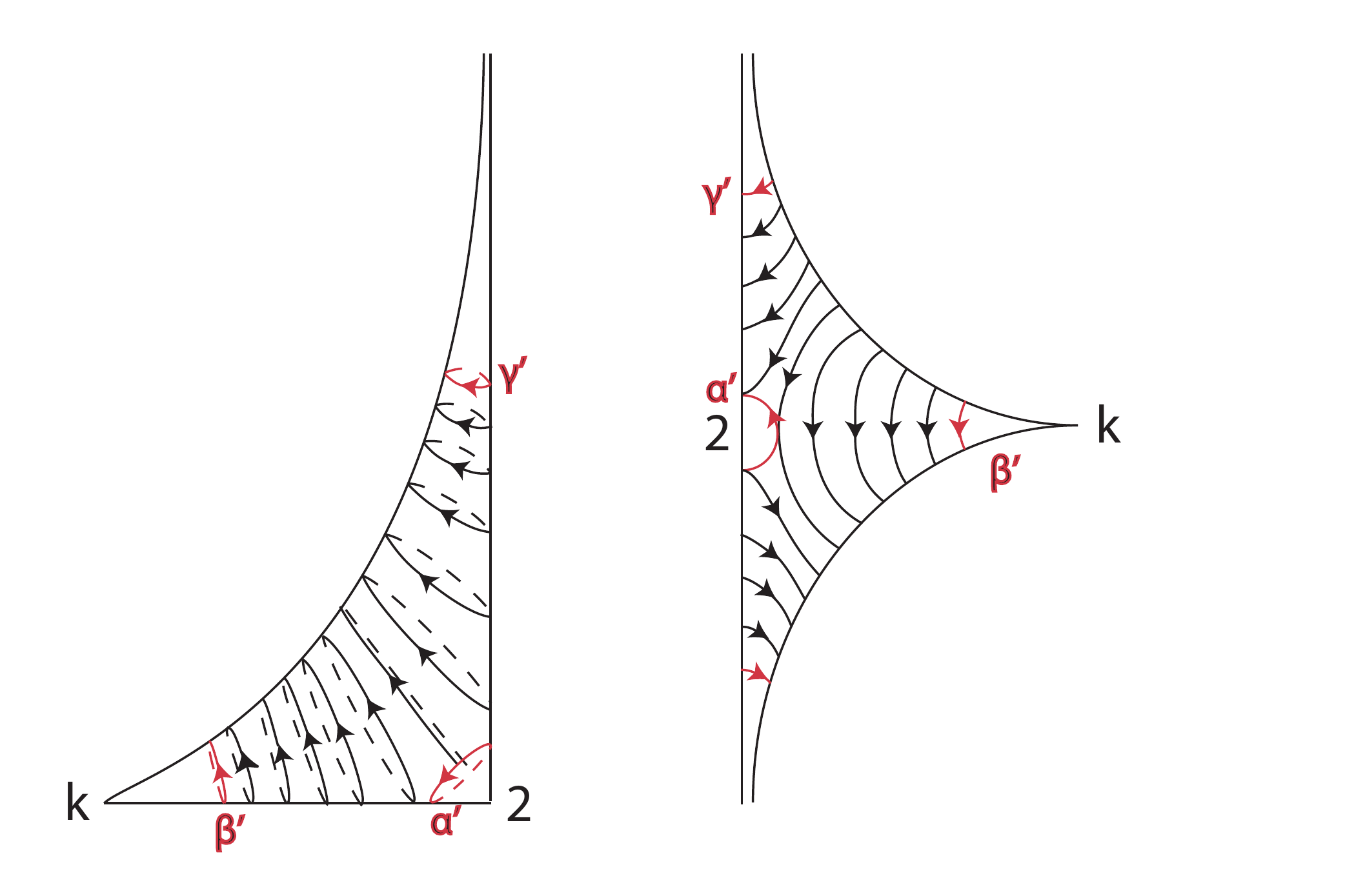} \caption{The vector field $\mathcal{V}$ over
$\O_{(2,k)}$, shown both on the orbifold and on a fundamental domain.}\label{vector field} \end{figure}

Another way of viewing the vector field $\mathcal{V}$ is as the image of an embedding of $A$ into $UT\O_{(2,k)}$, taking each point of $A$ to the pointer based at this point, with the direction given by the vector field at this point. The embedding has the following properties:

\begin{enumerate}

\item Every fiber in the Seifert fibration of $UT\O_{(2,k)}$ intersects $\mathcal{V}$ once, since each fiber consists of the circle of directions at a point, and $\mathcal{V}$ chooses one of these directions. Hence $\mathcal{V}$ is a punctured orbit surface for the Seifert fiber space $UT\O_{(2,k)}$, and in particular,  its boundary curves $\alpha'$, $\beta'$ and $\gamma'$ are crossing curves, i.e. cross each regular fiber on the boundary exactly once.

\item As can be seen in Figure \ref{vector field} , one boundary component $\beta'$ of $\mathcal{V}$ is a small loop around the $k-$cone point
with its tangent vectors, hence is isotopic to the singular fiber $\beta$. This means that on a torus which is the boundary of a tubular neighborhood of
the $k-$singular fiber, $\beta'$ is homologous to a longitude + $x$ meridians for some $x\in\Z$. A
regular fiber on this torus is a $(k,1)$ curve, and by property (1), it intersects $\beta'$ exactly once. Thus, $x$ must be trivial and $\beta'$ is
simply isotopic to a longitude $(1,0)$ on this torus. This uniquely determines the way to glue in the (k,1) singular torus, i.e., determine the way to complete the embedding to the $\beta$ neighborhood.

\item Considering again Figure \ref{vector field}, the boundary component $\alpha'$ of $\mathcal{V}$ is isotopic to a small loop around the $2-$cone point
with a direction rotating relative to its tangent vectors by one full rotation (clockwise). Hence, on the boundary torus of a tubular neighborhood of
the $2-$singular fiber, $\alpha'$ is homologous to a longitude $(1,0)$ minus a regular fiber (which is a $(2,1)$ curve on this torus), plus some number of meridians. Thus $\alpha'\cong(-1,y)$ for some $y\in\Z$. As a
Together with property (1) this yields $\alpha'$ is
isotopic to either minus a longitude $(-1,0)$ or a $(-1,-1)$ curve. The second option turns out to be the correct one, as will be computed below.

\item The third boundary component $\gamma'$ of the surface is isotopic to a small loop around the cusp
together with its tangent vectors.

\end{enumerate}

\begin{figure}[h]\centering \includegraphics[width=5cm]{./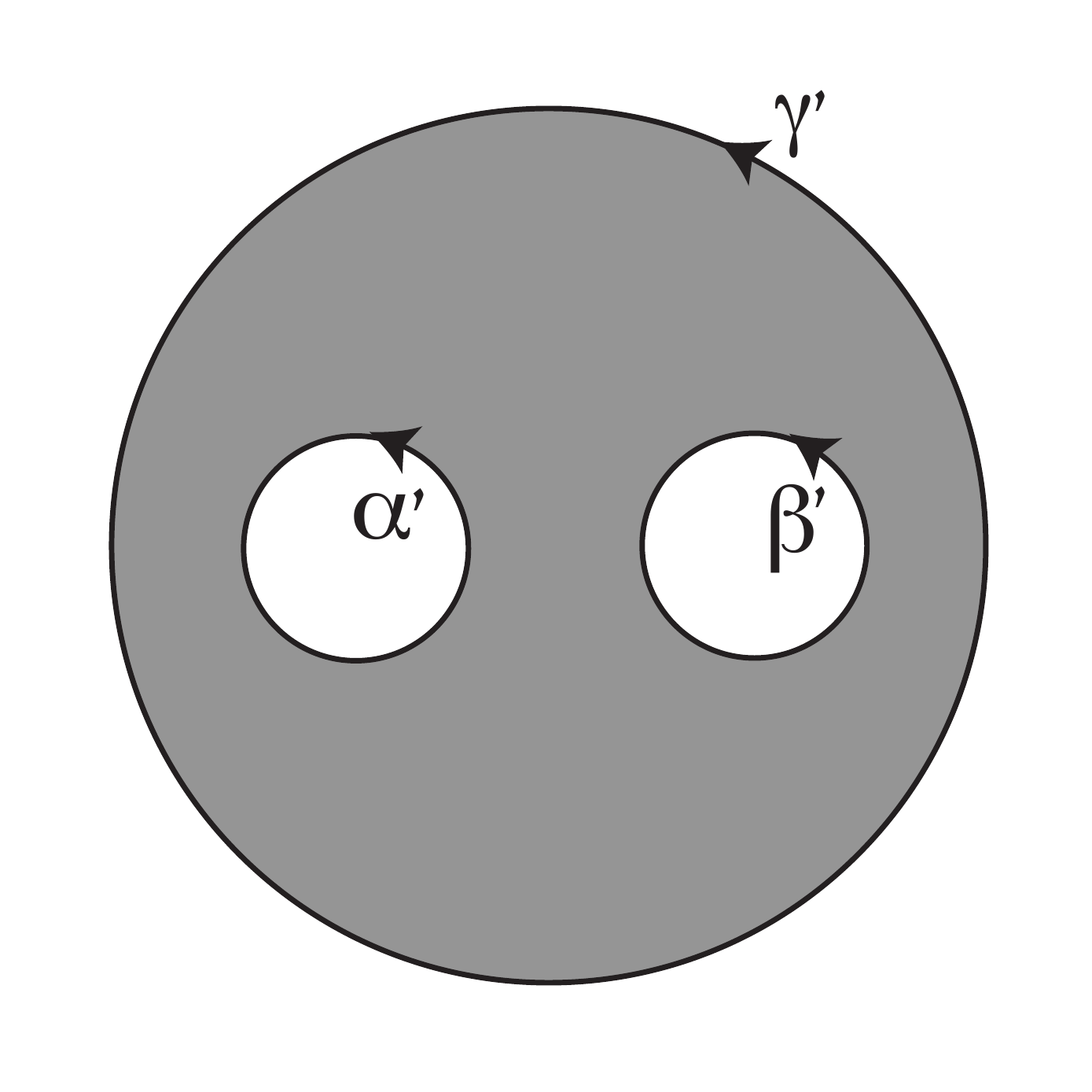} \caption{The vector field $\mathcal{V}$ is a subset of $UT\O_{(2,k)}$ homeomorphic to a pair of
pants with boundaries as above.}\label{pants} \end{figure}

\subsection{The Euler number \label{Euler}}

We now turn to the second step in explaining the embedding of $UT\O_{(2,k)}$ into the Lens spaces. This step consists of explicating the gluing of the missing tori. To begin with, let us note the following conclusion from our previous discussion. 

\begin{lem} All manifolds into which $UT\O_{(n,k)}$ embeds without adding a singular fiber are the Seifert fiber spaces
\bea M=\{O,o,0|b;(k,1),(n,1)\},\ b\in\Z \eea
\end{lem}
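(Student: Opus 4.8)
The plan is to read off the remaining Seifert invariants of $UT\O_{(n,k)}$ from the structure already established and then invoke the general discussion of open Seifert fiber spaces at the end of \S\ref{Seifert}. By the structure theorem proved above, $UT\O_{(n,k)}$ is a Seifert fiber space whose only singular fibers are the two fibers lying over the cone points, of types $(k,1)$ and $(n,1)$. Since $\O_{(n,k)}$ carries a single cusp, the total space is open with exactly one toral boundary component, namely the torus of directions over the cusp. It therefore remains only to fix the three scalar invariants $O/N$, $o/n$, and $g$, and then to see that the Euler number $b$ is the single free parameter governing all the ways of closing up.

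For the orientability data I would argue as follows. Away from the singular fibers the unit tangent bundle is an oriented circle bundle over the base, which is itself oriented since $\O_{(n,k)}=\Gamma\backslash\H$ is a quotient by orientation-preserving isometries; hence the total space is orientable and the invariant is $O$. The orbit surface $S$ is the underlying space of $\O_{(n,k)}$, which is a two-sphere carrying the two cone points and the single cusp; so $S$ is orientable, giving the invariant $o$, and has genus $g=0$. This pins the data down to $\{O,o,0\,|\,\cdot\,;(k,1),(n,1)\}$, leaving only the Euler number undetermined.

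Finally I would invoke the principle recorded at the end of \S\ref{Seifert}: an open Seifert fiber space with a single toral boundary is determined by all of its invariants except $b$, and each way of closing it by gluing a fibered solid torus along that boundary produces a closed Seifert fiber space in which $b$ is exactly the gluing parameter. In order to close \emph{without} introducing a new singular fiber, the glued-in torus must be a $(1,0)$ (regular) fibered solid torus; the only residual freedom is then how many fibers the meridian wraps, i.e. the integer $b$, matching the construction recipe of \S\ref{Seifert} in which the meridian is sent to $c_0-b\cdot f$. Each $b\in\Z$ yields the closed Seifert fiber space $\{O,o,0\,|\,b;(k,1),(n,1)\}$ into which $UT\O_{(n,k)}$ embeds, and conversely every such closing arises in this way, which is precisely the asserted list.

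The step I expect to need the most care is this last one, since the lemma is essentially an assembly of facts already in hand: one must verify that the family of closings realized without adding a singular fiber is exactly the $\aleph_0$-family indexed by $b\in\Z$. Concretely, one checks both that every admissible $(1,0)$-gluing is captured by an integer Euler number, and that distinct values of $b$ genuinely give non-homeomorphic closed manifolds rather than being accidentally identified under the normalization $0\le\nu_i\le\mu_i/2$ of the invariants $(k,1)$ and $(n,1)$. Here Seifert's classification theorem does the real work, and the only point requiring attention is that the framing of the crossing curve $c_0$ on the cusp torus is fixed consistently, so that the bijection between gluings and integers $b$ is well defined.
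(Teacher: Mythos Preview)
Your proposal is correct and follows essentially the same approach as the paper: read off the remaining Seifert invariants ($O$, $o$, $g=0$) from the structure already established, take the singular fiber data $(k,1)$, $(n,1)$ from the preceding theorem, and appeal to the discussion at the end of \S\ref{Seifert} for the role of $b$. The only notable difference is that for orientability of the total space the paper simply cites the general fact that unit tangent bundles are always orientable, whereas you argue it directly from the oriented-circle-bundle-over-oriented-base structure; your final paragraph about verifying that distinct $b$ give distinct manifolds is more careful than the paper, which leaves this implicit in Seifert's classification.
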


\begin{proof} This follows since the pair of pants is orientable, unit tangent bundles are always orientable \cite{UTB_orientable}, $UT\O_{(n,k)}$ has two singular fibers,
and we computed the invariants $(k,1)$ and $(n,1)$ of the two singular tori. 
\end{proof}

Recall any gluing matrix $M_{n,k,c}$ determines an embedding into a specific lens space $M$ and $M$ can be computed directly from the matrix.

We now relate the matrix to the Euler number of $M$. This will determine two things: which of the two possibilities for
the coordinates of $\alpha'$ above is the correct one, enabling us to glue in the $\alpha$ neighborhood; and the relation between the parameter $c\in\Z$ of the
gluing matrix and the Euler number $b\in\Z$ of the lens space.

The Euler number $b$ is determined as follows. An abstract punctured orbit surface, in our case a pair of pants is taken together with its (trivial) unit tangent bundle. A fibered torus with the correct invariants is glued into two holes in the pants, so the boundary of the pair of pants is a crossing curve for these tori. These are the neighborhoods of the singular fibers. Finally, to close the manifold a torus is glued into the third hole in the pair of pants, so that the meridian $\mu$ is glued to the curve $\gamma'-b\cdot f$ where $\gamma'$ is the remaining boundary component of the pair of pants. 

In our case $\gamma'$ is a crossing curve (see Section \ref{Seifert}), hence one can choose the longitude $\lambda$ to be isotopic to a fiber $f$. Thus for $M$ with Euler number $b$, $\gamma'\cong\mu+b\cdot\lambda$. 

Cut the pair of pants determined by the vector field along some curve $\delta$ connecting the $\gamma'$ boundary to the $\beta'$ boundary component.
then on one side of the path $\delta$ pull the surface into a stripe which then goes $-b$ times in the direction of the fibers above $\delta$, and then glued to the other side of $\delta$. This takes $\gamma'\cong\mu+b\cdot f$ to $\tilde\gamma'\cong\mu$. This means that in the manifold $M$ one boundary component of the new pair of pants is a boundary of an embedded disk, by which one can remove that boundary component and arrive at an embedded annulus $A$, with two boundaries: $\alpha'$ and $\beta'-b\cdot f$. Thus in $M$ $\alpha'$ can be isotoped to $-\beta'+b\cdot f$.

The matrix $M_{k,2,c}$ computed before takes the curve 
$\left(\begin{array}{c} -1 \\ -1 \end{array}\right)$ in the $(2,1)$ torus
to the curve
\bea
\left(\begin{array}{c} -1 \\ 0 \end{array}\right)+(c-1)\left(\begin{array}{c} k \\ 1 \end{array}\right)=-\beta'+(c-1)\cdot f\eea in the $(k,1)$ torus.
This decomposition cannot be achieved with the other possible form for $\alpha'$.

This yields,
\bea
\alpha'=\left(\begin{array}{c} -1 \\ -1 \end{array}\right),\eea\bea
b=c-1,
\eea
completing the embedding determined by $\mathcal{V}$.

\begin{rem} The complement of the image of the embeddings of $UT\O_{(n,k)}$ into lens spaces discussed before is a solid fibered torus. This torus is a regular neighborhood of any one of its fibers,i.e., the image of the unit tangent bundle is the complement of a regular fiber in the lens space. Denote this fiber by $\xi$. The
missing fiber, or `missing knot' $\xi$, plays the same role as the trefoil for the modular flow. 
\end{rem}


\subsection{Interpreting the embedding}

We next make the embedding much more explicit: for any given pointer in $UT\O_{(n,k)}$ we will be able to identify its image in the lens space.

Recall each of the relevant lens spaces are the union of a $(k,1)$ and a $(2,1)$ tori.
Take the $(k,1)$ torus to be a very small neighborhood of the
singular fiber $\beta$ so that $\beta'$ is on its boundary. It then follows that the $(2,1)$ torus contains the rest of $UT\O_{(n,k)}$, and in particular the vector field. 
We denote the $(2,1)$ torus by $\T_2$ and take this torus to be oriented in the usual way, so that the orientation of the meridian followed by the orientation of the longitude gives the orientation of $\partial(\T_2)$. The missing fiber $\xi$ corresponding to the cusp is then a regular fiber in the interior of this torus (and not on its boundary as before).

We determined the coordinates of the boundary curves $\alpha'$, $\beta'$ and $\gamma'$ of  the vector field. The coordinates are given each on the boundary tori which are the neighborhoods of  $\alpha$, $\beta$ or $\xi$. Any two fixed boundary curves determine the pair of pants up to isotopy, see Seifert \cite{Seifert} page 44. Hence, it suffices that we identify one such pair of pants in the (2,1) torus, in order to identify $\mathcal{V}$. 

We now describe such a pair of pants for the case $b=0$. For this case $\gamma'\cong\mu$ is a meridian and so when filling in $\mu$ with a disk one gets an annulus intersecting $\xi$ transversally. This corresponds to gluing the tori comprising the lens space by the matrix $M_{k,2,1}$ (since $c=1-b$). We represent the $(2,1)$ solid torus without a neighborhood of its core $\alpha$ as a union of concentric tori. Consider a $(-1,-1)$ curve on any one of these concentric tori. It intersects any $(2,1)$ fiber once.  As we saw in \S \ref{Euler}, $\alpha'$ is a $(-1,-1)$ curve, and hence there is an annulus connecting a $(-1,-1)$ curve $\delta$ on the inner boundary of the torus $T_2$ to $\alpha'$,  through $(-1,-1)$ curves on each of the concentric  tori. The curve $\xi$ appears in $T_2$ as a $(2,1)$-curve on one of the concentric tori and therefore will intersect this annulus transversally at a point. Thus, by puncturing the annulus at that point we arrive at the desired pair of pants. This follows from uniqueness since the punctured annulus  has the desired boundaries $\alpha'$ and $\gamma'$. It follows that the pair of pants also has $\beta'$ as a boundary, and indeed, one can easily check that the $M_{k,2,1}$ matrix takes $-\delta$ to $\beta'$:

\bea
\left(\begin{array}{cc} 
k-1 & 2-k \\
 1  & -1 \\ 
\end{array}\right) \left(\begin{array}{c}
1\\
1\\
\end{array}\right)=
\left(\begin{array}{c}
1\\
0\\
\end{array}\right)=\beta'.
\eea

\begin{figure}[h]\centering 
\includegraphics[width=5cm]{./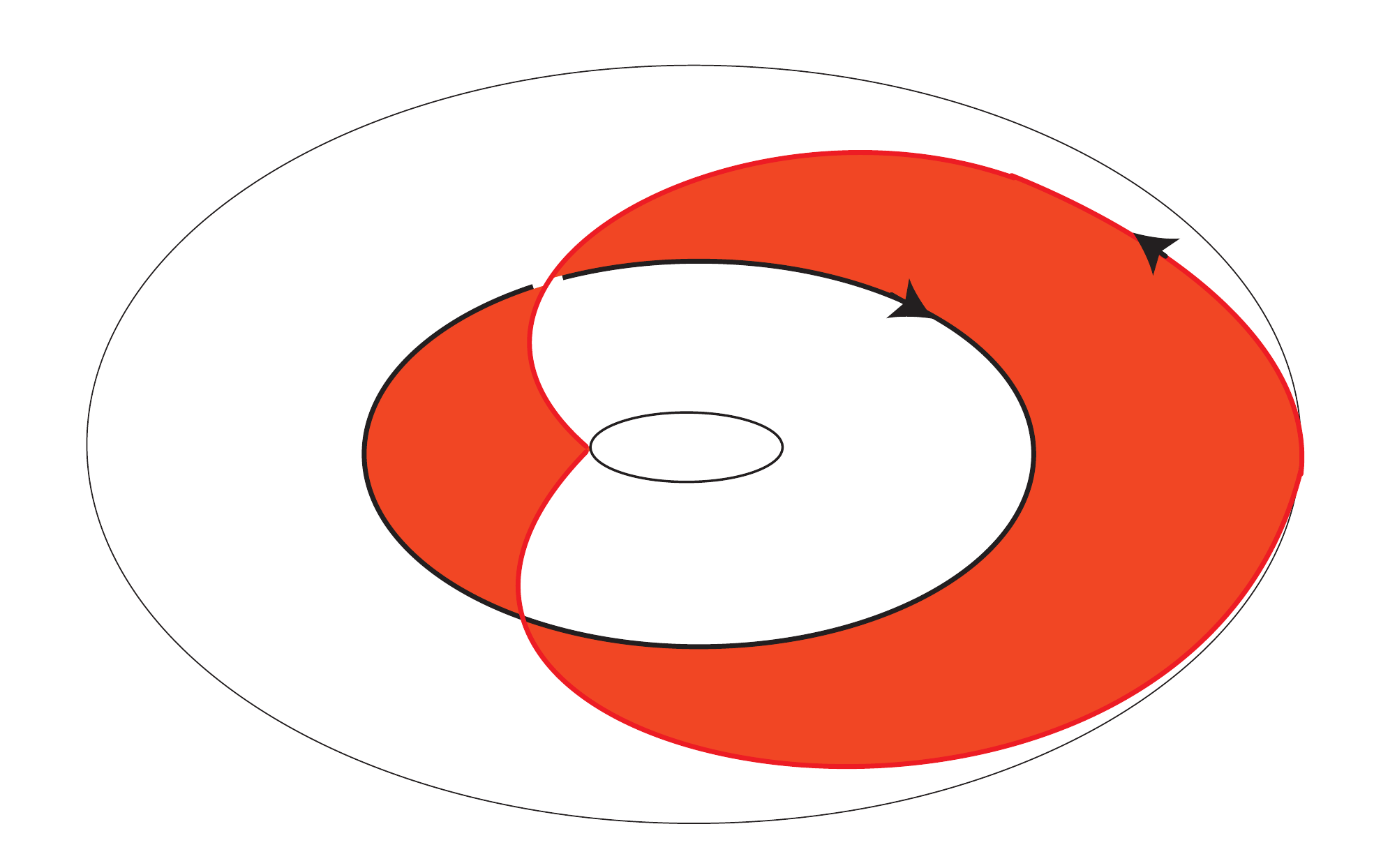}
\caption{Two of the boundaries of the pair of pants corresponding to the vector field, the third being the puncture by the missing fiber $\xi$.}\label{embedded_pair}
\end{figure}

We continue our analysis of the embedding  (which is determined up to isotopy) from $UT\O_{(2,k)}$ into the lens space, focusing 
on embedding the complement of small neighborhoods of the singular fibers into $\T_2$.

To begin with, our discussion in the previous section shows that the pair of pants described above (and depicted in Figure \ref{embedded_pair}) is the image of all  pointers comprising the vector field. Each pointer  can be rotated by any angle while fixing its base point. By rotating all pointers of the vector field by the same angle at the same time we get an isotopy of the entire pair of pants $\mathcal{V}$. Of course, when the angle is $2\pi$ we retrieve $\mathcal{V}$ once again,  as in Figure \ref{rotation_pair}. Naturally, the intermediate punctured orbit surfaces all have the same Seifert invariants and thus are determined by $(-1,-1)$ curves.

\begin{figure}[h]\centering 
\includegraphics[width=5cm]{./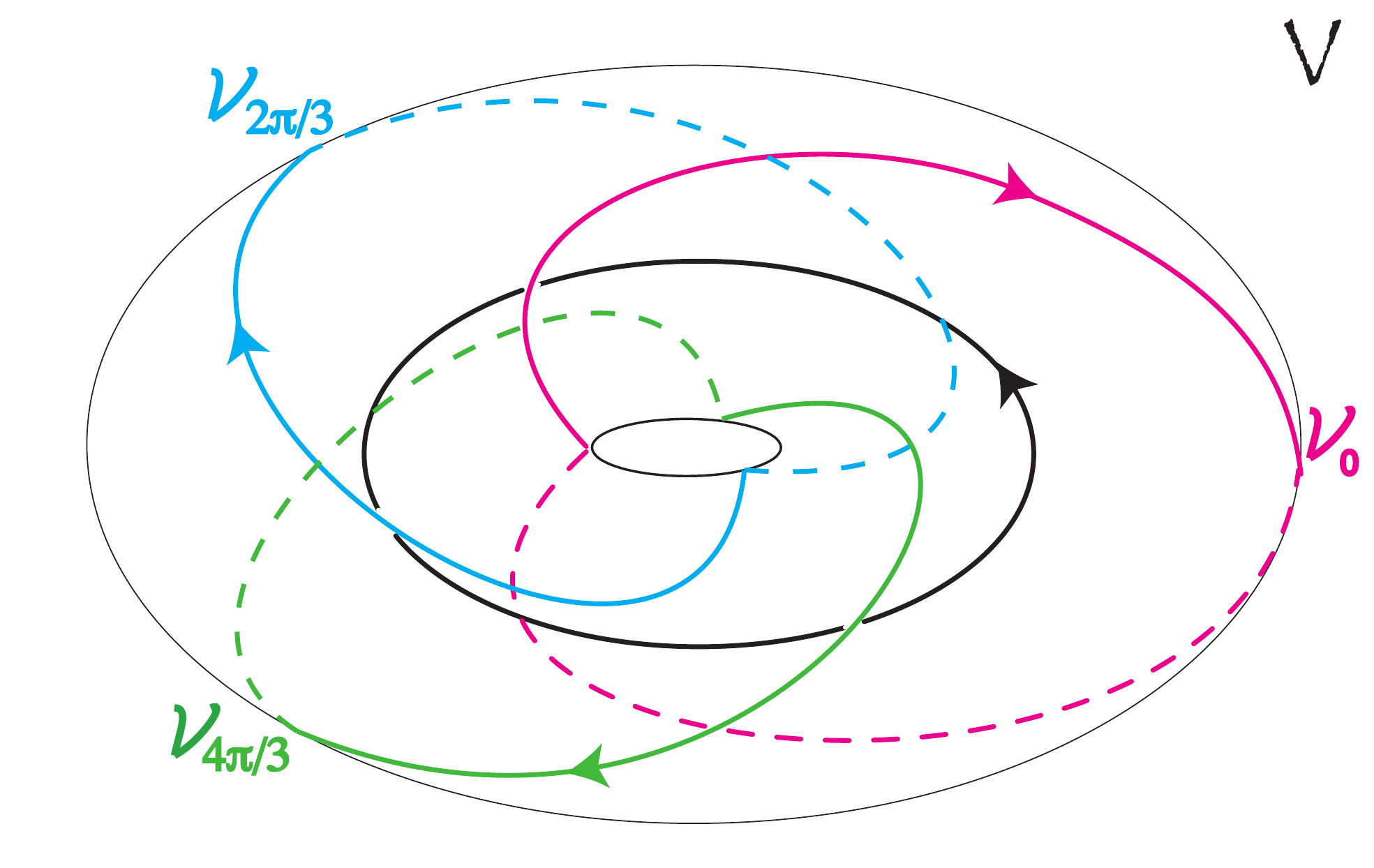}
\caption{Boundaries of three isotopic pairs of pants, the boundaries near the alpha core are perpendicular to the outer ones, and are sketched as one curve. The third boundary component is the puncture by $\xi$ . The curve marked $\mathcal{V}_0$ is the boundary of the original vector field $\mathcal{V}$. Every point travels along its fiber as one rotates the vector field counter clockwise, yielding the other pairs of pants.}\label{rotation_pair}
\end{figure}

The set of these pairs of pants is determined up to isotopy. Recall also that a counterclockwise rotation corresponds to flowing along the fibers in the positive direction.
This yields a concrete mapping $(p,\theta)\mapsto\T_2$ as desired, where $\theta$ is the angle relative to the vector field.


\section{Templates \label{templates}}

\subsection{The Birman Williams theorem (\cite{Birwil2}, \cite{book})\label{Birman_Williams}}

\begin{Def} A \emph{template} is a compact branched two-manifold with boundary and a smooth expanding semiflow built
from a finite number of branch line charts, as in Figure \ref{branch_chart} \end{Def} 

\begin{figure}[h]\centering \includegraphics[width=5cm]{./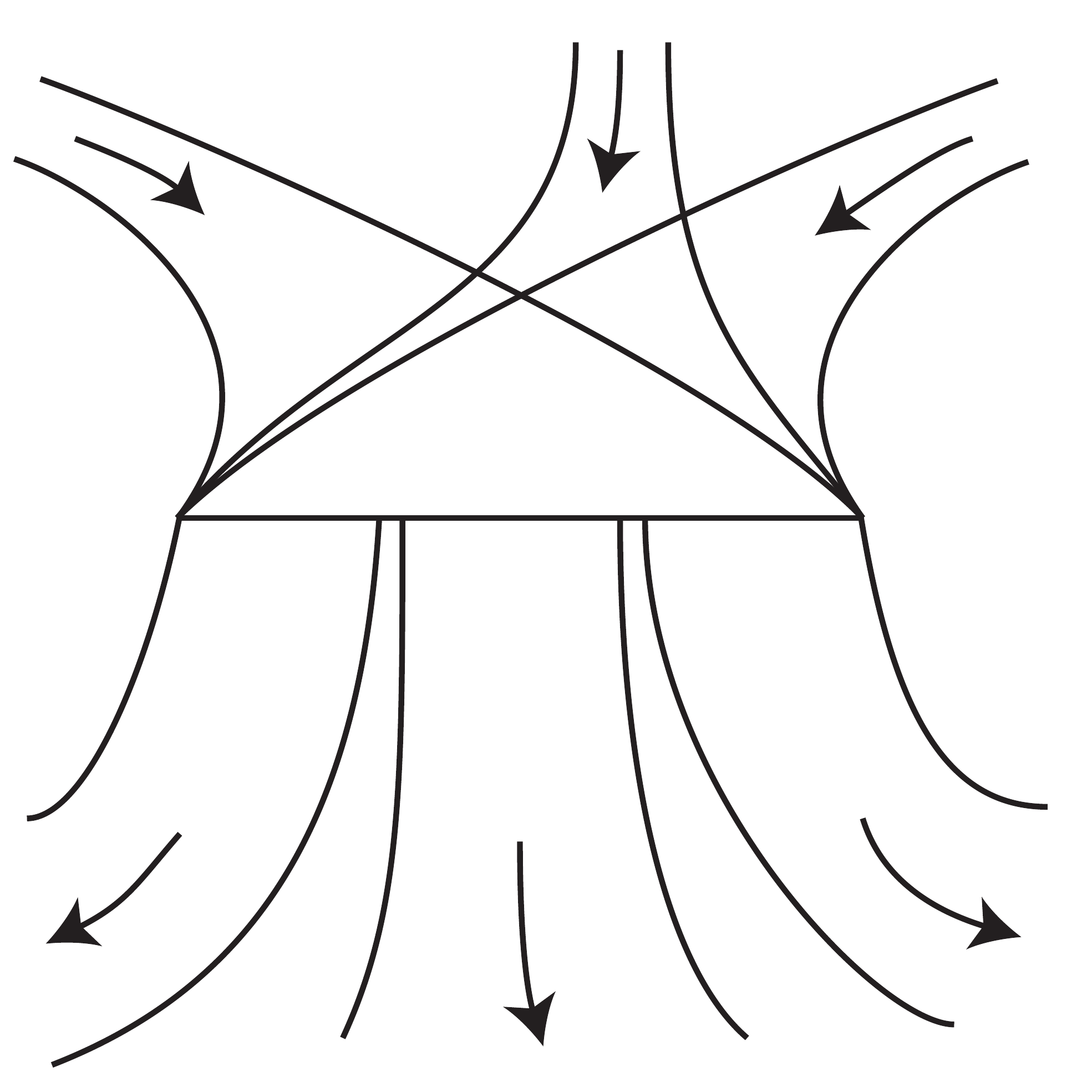} \caption{}\label{branch_chart}
\end{figure}

We now formulate the following fundamental fact, which underlies our discussion. 
\begin{thm}[Birman \& Williams]\label{BWthm} Given a flow $\phi_t$ on a three-manifold
$M$ having a hyperbolic chain-recurrent set, the link of periodic orbits $\mathcal{L}_{\phi}$ is in bijective correspondence with the link of
periodic orbits $\mathcal{L}_{\mathcal{T}}$ on a particular embedded template $\mathcal{T}\subset M$. On any finite sublink, this correspondence is via ambient isotopy.
\end{thm}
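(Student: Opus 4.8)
The plan is to construct the template by collapsing the flow's hyperbolic recurrent set along its stable direction, and then to verify that this collapse respects periodic orbits and is realizable by isotopy on finite sublinks. I would begin with the hyperbolic structure on the chain-recurrent set $\Omega$. Hyperbolicity supplies, at each $x\in\Omega$, a continuous splitting $T_xM=E^s_x\oplus E^u_x\oplus\langle\dot\phi\rangle$ into contracting, expanding, and flow directions, together with local strong stable and unstable manifolds $W^{ss}_{\mathrm{loc}}(x)$ and $W^{uu}_{\mathrm{loc}}(x)$ that vary continuously and integrate these distributions. Choosing an isolating neighborhood $N$ of $\Omega$ and a finite cover by flow boxes in which $\Omega$ carries a local product structure, I would assemble the machinery needed for the collapse.

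The central construction is the Birman--Williams projection. I would define $x\sim y$ whenever $d(\phi_t x,\phi_t y)\to 0$ as $t\to+\infty$, that is $y\in W^{ss}(x)$, and collapse each strong stable manifold to a point to form the quotient $\mathcal{T}=N/\!\sim$. Within each flow box this relation identifies exactly the $E^s$ direction, so the local model (a Cantor set of sheets crossed with an interval) becomes a single two-dimensional sheet; where adjacent unstable sheets are forced together along the boundaries of the flow boxes one obtains the branch lines, making $\mathcal{T}$ a compact branched two-manifold. Because $\phi_t$ contracts the collapsed direction, it descends to a well-defined smooth expanding semiflow on $\mathcal{T}$, so $\mathcal{T}$ is a template. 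The projection $\pi\colon N\to\mathcal{T}$ sends each periodic orbit of $\phi_t$ to a periodic orbit of the semiflow, and since a hyperbolic periodic orbit meets its own strong stable leaves only at single points while distinct periodic orbits lie on distinct leaves, $\pi$ restricts to a bijection $\mathcal{L}_\phi\to\mathcal{L}_{\mathcal{T}}$.

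The part demanding the most care is the final assertion that on any finite sublink the correspondence is realized by an ambient isotopy of $M$, rather than being a mere bijection of knot types. The projection $\pi$ is itself far from injective, so it cannot serve as the isotopy; instead, using that $\mathcal{T}$ sits embedded in $M$ as the image of the collapse, I would isotope each periodic orbit onto $\mathcal{T}$ by sliding it along the strong stable leaves through its points. Here the crucial feature is that the collapse only identifies points that are \emph{asymptotic} under the flow: for finitely many periodic orbits the strong stable leaves meeting them are pairwise disjoint within $N$ and have uniformly bounded length there, so sliding each orbit a bounded distance keeps the orbits embedded and mutually disjoint. Patching these local slides with a partition of unity subordinate to the flow-box cover then yields an ambient isotopy carrying the chosen sublink of $\mathcal{L}_\phi$ onto the corresponding sublink of $\mathcal{L}_{\mathcal{T}}$. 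I expect the main obstacle to be verifying that the simultaneous slides never drag one orbit across another, which is precisely where disjointness of the finitely many strong stable leaves and the local product structure from hyperbolicity are indispensable; the same construction must break down for the full, generally infinite, link, where the leaves accumulate and bounded slides would collide, and this is exactly why the theorem restricts to finite sublinks.
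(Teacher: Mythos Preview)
The paper does not actually prove this theorem. Theorem~\ref{BWthm} is quoted from the literature (the subsection header cites \cite{Birwil2} and \cite{book}), and the text following it only offers an informal, one-paragraph sketch of the idea---collapse stable manifolds, note that distinct periodic orbits never share a stable leaf so they survive the collapse---together with the remark that in the geodesic-flow-with-cusp setting the construction simplifies because opening the cusp already thins out the recurrent set. There is no proof in the paper for you to compare against.

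Your outline is a reasonable summary of the original Birman--Williams argument and is consistent with the paper's informal description. Two points are worth flagging if you intend this as an actual proof rather than a sketch. First, the paper's informal paragraph explicitly notes that when the stable foliation is dense (as in an Anosov flow) the na\"ive collapse produces a non-Hausdorff quotient, and that Birman and Williams first perform a surgery (a DA-type modification) on one or two periodic orbits to reduce the recurrent set to something one-dimensional before collapsing; your write-up silently assumes an isolating neighborhood $N$ with clean local product structure and so does not address this case. Second, your description of the quotient as ``a Cantor set of sheets crossed with an interval'' collapsing to a single sheet presumes the transverse structure is totally disconnected, which is an additional hypothesis beyond mere hyperbolicity of the chain-recurrent set; in the general statement you would need to justify that, or restrict to basic sets of Axiom~A flows as in the original papers.
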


In our case of geodesic flows on hyperbolic orbifolds, the chain recurrent set equals the recurrent set, and this set is the closure of the set of closed geodesics. The recurrent set in this case as the entire system is always hyperbolic due to
the divergence of geodesics in $\H$: Hyperbolicity means that transversal to the direction of the flow there is an expanding direction, and a contracting direction. These two directions are given by the horocycles, as explained in \S \ref{Geodesic_flows} for the geodesic flow on the hyperbolic plane. Since hyperbolicity is a local property, it descends to the quotients of the system, the flows on hyperbolic surfaces and orbifolds. Therefore by Theorem \ref{BWthm} there exists a template for the flow on any 
hyperbolic orbifold.

The general method of obtaining a template consists of collapsing the stable manifolds of the system. Thus one gets a semi-flow on two dimensional system, while all periodic orbits consist as different periodic orbits never belong to the same stable manifold.
The problem is that in general the stable manifolds will be dense and collapsing them would result in a non-Hausdorff space. Very informally, Birman and Williams overcome this by first "separating" the stable manifolds by preforming a surgery on one or two periodic orbits, reducing the dimension of the recuurent set to one, and then finding a nice neighborhood to this set in which the stable manifolds can indeed be neatly collapsed. This process is not constructive for a general flow, and thus obtaining templates for different flow is an interesting problem, and templates have been obtained for a limited number of flows. 

For hyperbolic geodesic flows the stable and unstable manifolds are known as in \S \ref{Geodesic_flows}. We deal with the case of an orbifold with a cusp, for which the dimension of the basic set can always be reduced: 
by opening the cusp as in \S \ref{representation_variety} one arrives at system in which the set of periodic orbits is already not dense, and so the stable direction can simply be collapsed. Ghys uses this fact to obtain the template for the modular surface, a further discussion of this matter can be found in \cite{Pierre}. Thus for our case there is a better understanding of obtaining the template.
The details are described in the remaining of this section where we construct the templates
for our examples.

\subsection{Constructing a template for $\O_{(2,k)}$}

We now describe the embedded template in $\T_2$. 
Following Ghys \cite{Ghys}, consider the $k$ fold cover of the fundamental domain of $\O^d_{(2,k)}$ as in Figure \ref{domain}, denoted $D$, where the generators of the group act by a rotation by $\pi$ about $x$, and a rotation by $2\pi/k$ about $y$  (see \S \ref{representation_variety}.

\begin{figure}[h]\centering
\includegraphics[width=5.5cm]{./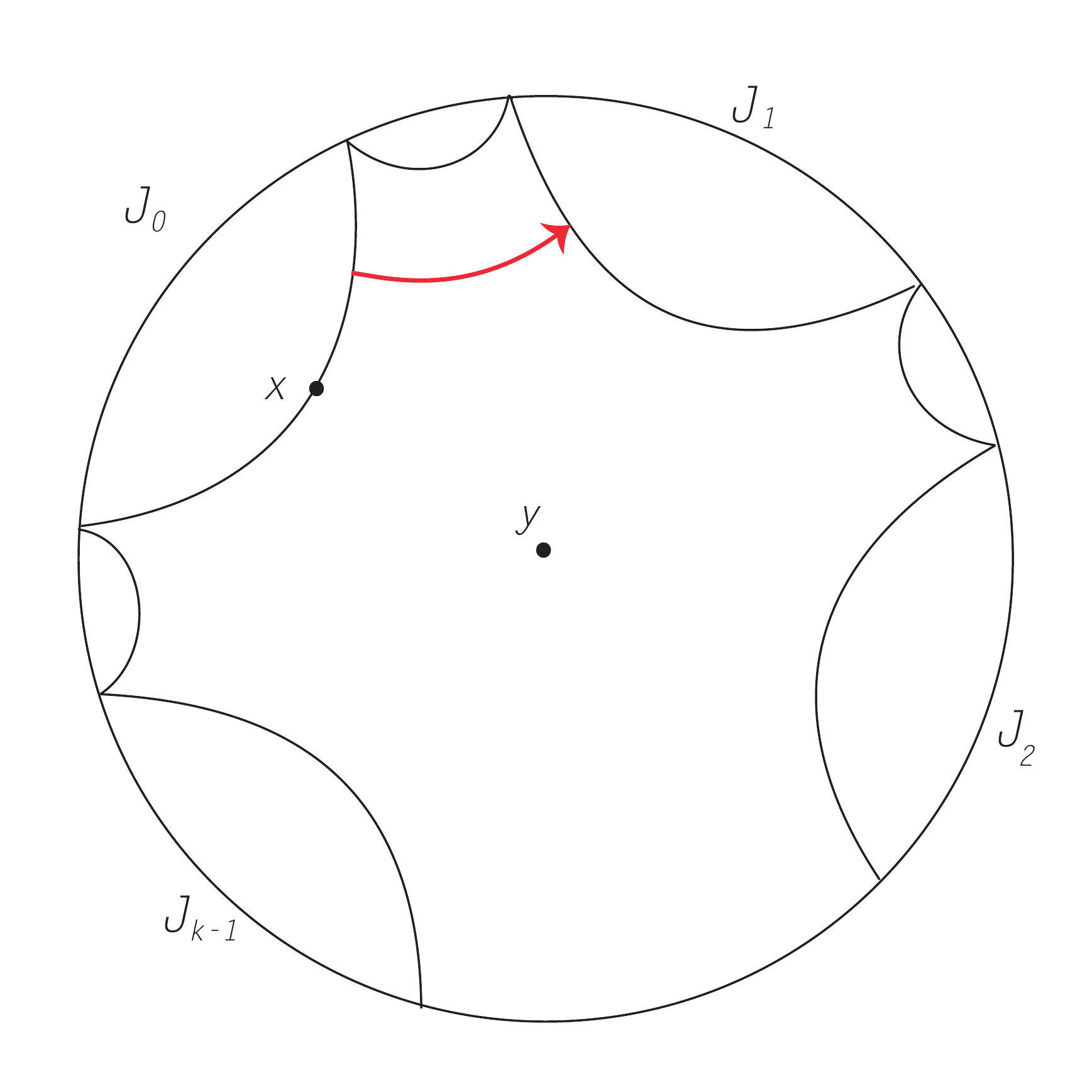}
\caption{}\label{domain}
\end{figure}

For every closed geodesic there is a lift passing through $D$, crossing from some segment of $\{J_0,\hdots,J_{k-1}\}$ to another. By using the rotational symmetry about $y$, any closed geodesic has a lift with an arc emanating from $J_0$ and crossing the domain. At the endpoints of the arc, where the lift leaves $D$, we use again the rotation about $y$ and then the rotation about $x$ to identify each endpoint with a starting point of another arc. Hence these arcs contain the recurrent set of the geodesic flow.

We can choose the segment along the boundary of the fundamental domain  (depicted in Figure \ref{branch_line}) with perpendicular direction vectors into the domain, as a single branch line for the template. 

\begin{figure}[h]\centering \includegraphics[width=5cm]{./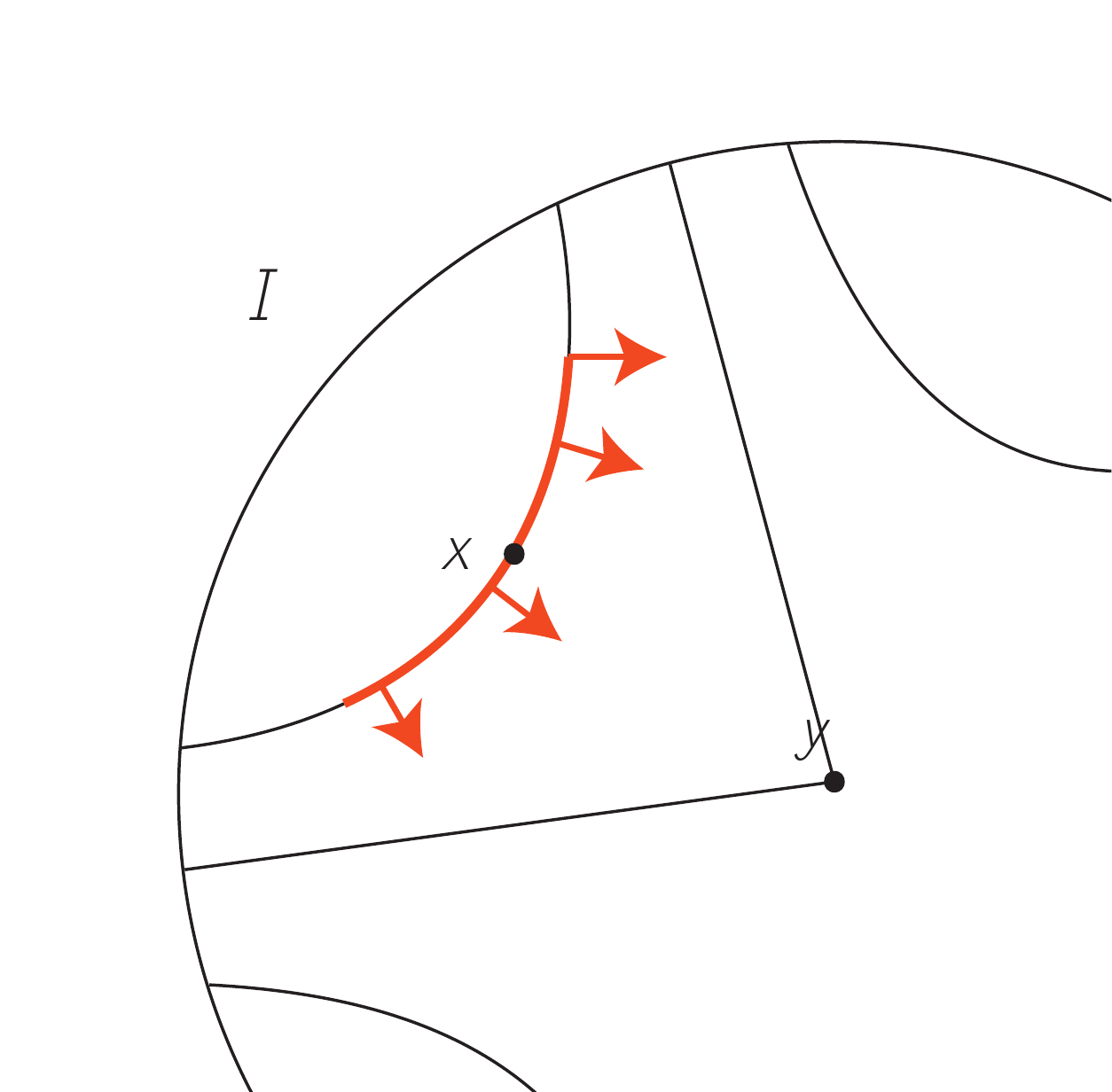} \caption{}\label{branch_line}
\end{figure}

For a general geodesic that is not necessarily perpendicular when entering the domain, we can always choose a different point along the stable manifold of the entry point of the geodesic, and arrive at a geodesic that is perpendicular at its entry point. Recall the stable manifold is the horocycle corresponding to the starting point at infinity (on $J_0$) of the geodesic, with the direction vectors perpendicular to the horocycle- pointing away from the starting point. If the geodesic is already perpendicular at the entry point, the horocycle and the boundary of the domain are tangent, and we do nothing. Else, the horocycle crosses the boundary transversally and enters the domain. The horocycle returns to $J_0$, and therefore must cross the boundary again at some other point. By looking at the geodesics perpendicular to the horocycle at each point between the two crossing points, we see the angle they create with the boundary changes monotonically, and must pass through $\pi/2$. Hence, by the mean value theorem there exists a point as required. Thus, the chosen branch line contains a point of the equivalence class of every arc passing in $D$, and so catches all the recurrent dynamics.

Using the embedding derived in the previous section, we are able to embed the branch line in $\T_2$. One half of the branch line is contained in $\mathcal{V}$, while the other half in $\mathcal{V}_{\pi}$. Hence, the branch line is embedded as shown in Figure \ref{embedded_cores} for the particular case $(2,5)$. 

Our template has $k-1$ stripes emanating from the branch line, each containing the arcs reaching the same segment $J_i$ at the endpoint.  The stripes each return to the branch line by using the group symmetries as above. These stripes are called ears, and we denote the ear reachng $J_i$ by $E_i$.  Each ear stretches across the entire branch line when returning to it, as any starting point can be obtained by the symmetries, from endpoints at any segment $J_1,\hdots,J_{k-1}$.  Denote the core of $E_i$ by $c_i$. $c_i$ is the unique closed geodesic contained in the ear. 

To understand the embedding of the template we have to understand the embedding of each core, and the twisting of the ear about its core. We begin with embedding the cores. To this end consider Figure \ref{cores} comparing the tangent vectors to each of the ear cores to the vector field $\mathcal{V}$. Here we draw $\mathcal {V}$ by taking the $k$-fold cover of $\mathcal{V}$ as depicted in Figure \ref{vector field}.
\begin{figure}[h]\centering \includegraphics[width=7cm]{./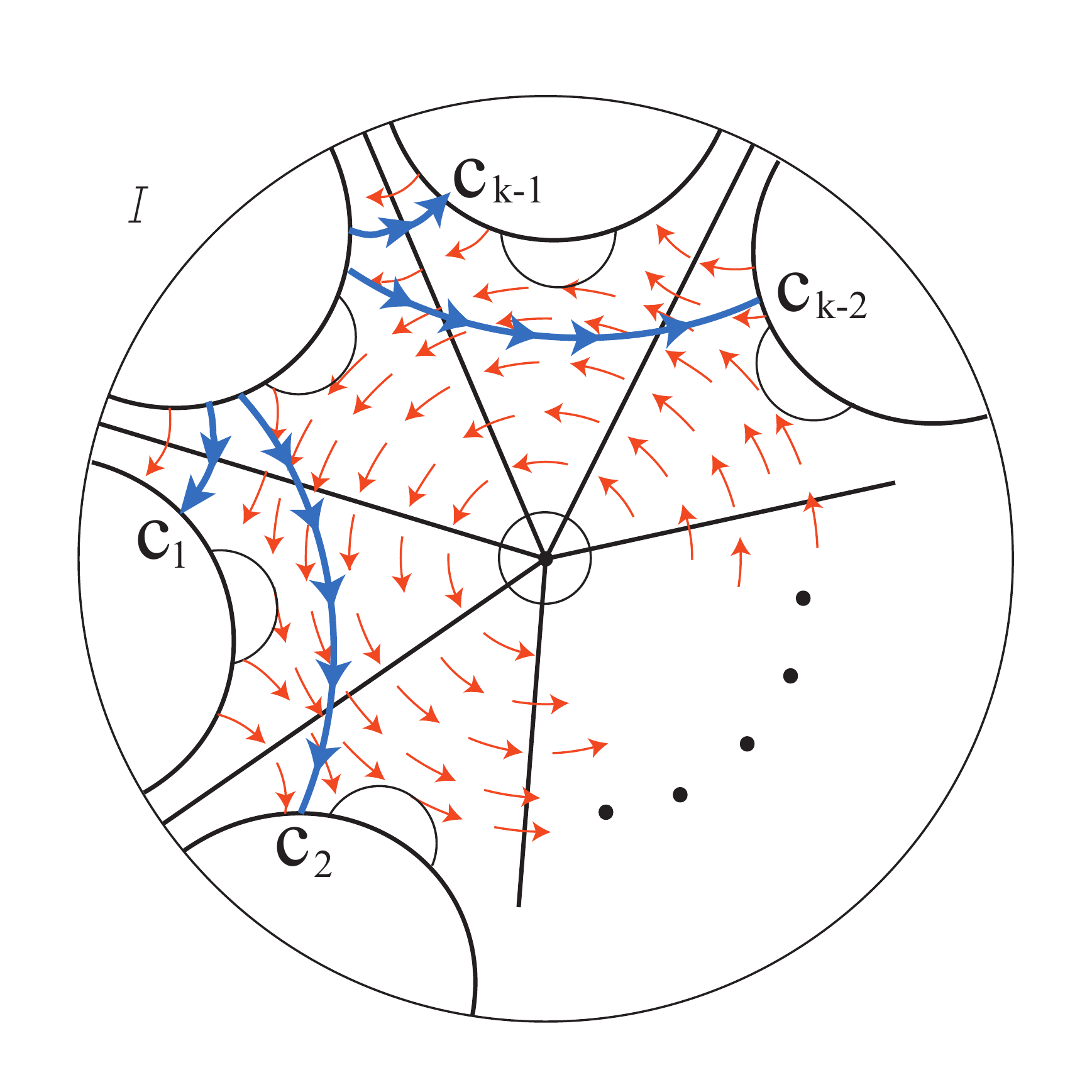} \caption{}\label{cores}
\end{figure}

Now, by considering at the same time this figure and Figure \ref{rotation_pair}, this yields the embedding of the cores into $\T_2$. Together with the branch line, this is sketched for the case $(2,5)$ in Figure \ref{embedded_cores}. For obtaining this figure, the cores $c_1$ and $c_4$ (and in general $c_{p-1}$) are easy do identify as small loops around the missing fiber $\xi$, as on the orbifold they are small loops around the cusp, and they are contained in $\mathcal{V}$ and $\mathcal{V}_\pi$ respectively by Figure \ref{cores}. 
The next two cores can be understood as they follow loops further from the cusp, then around the 2-cone point, then again around the cusp, while traveling back and forth through the different pairs of pants, as their angle relative to $\mathcal{V}$ increases, then decreases, and vice versa, as in Figure \ref{cores}. This is enough for understanding the four cores of the $(2,5)$ case, given in Figure \ref{embedded_cores}, or for these four loops in ane $(2,k)$ template, $k>3$.

\begin{figure}[h]\centering \includegraphics[width=10cm]{./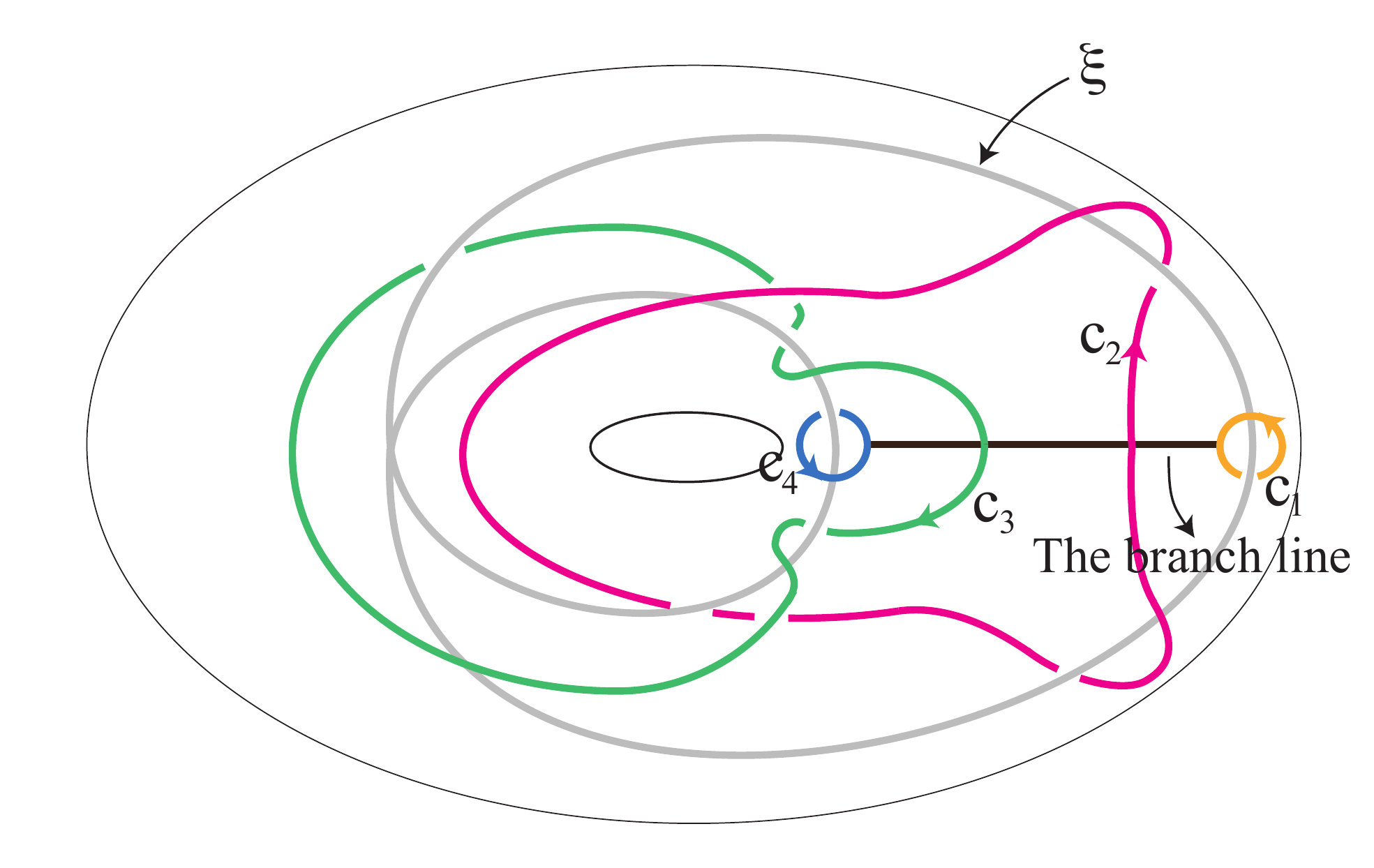} \caption{}\label{embedded_cores}
\end{figure}

We now consider the next ears in a general $(2,k)$ template. The four first ear cores for any flow on $\O_{(2,k)}$ for $k\geq9$ is given in Figure \ref{next_cores}. We note two facts regarding these cores. Each subsequent core  reaches a larger angle relative to the vector field before returning back to the vector field direction.  The core corresponding to the ear $E_j$ has $j$ parts each isotopic to $\beta'$, that is a circle around the $k$-cone points, and each subsequent core is closer to the $\beta'$ curve. These four first loops for any $k$ are sketched, by the same method as for the to first (and last) ears above, in Figure \ref{four_loops}. 

\begin{figure}[h]\centering 
\includegraphics[width=2.5cm]{./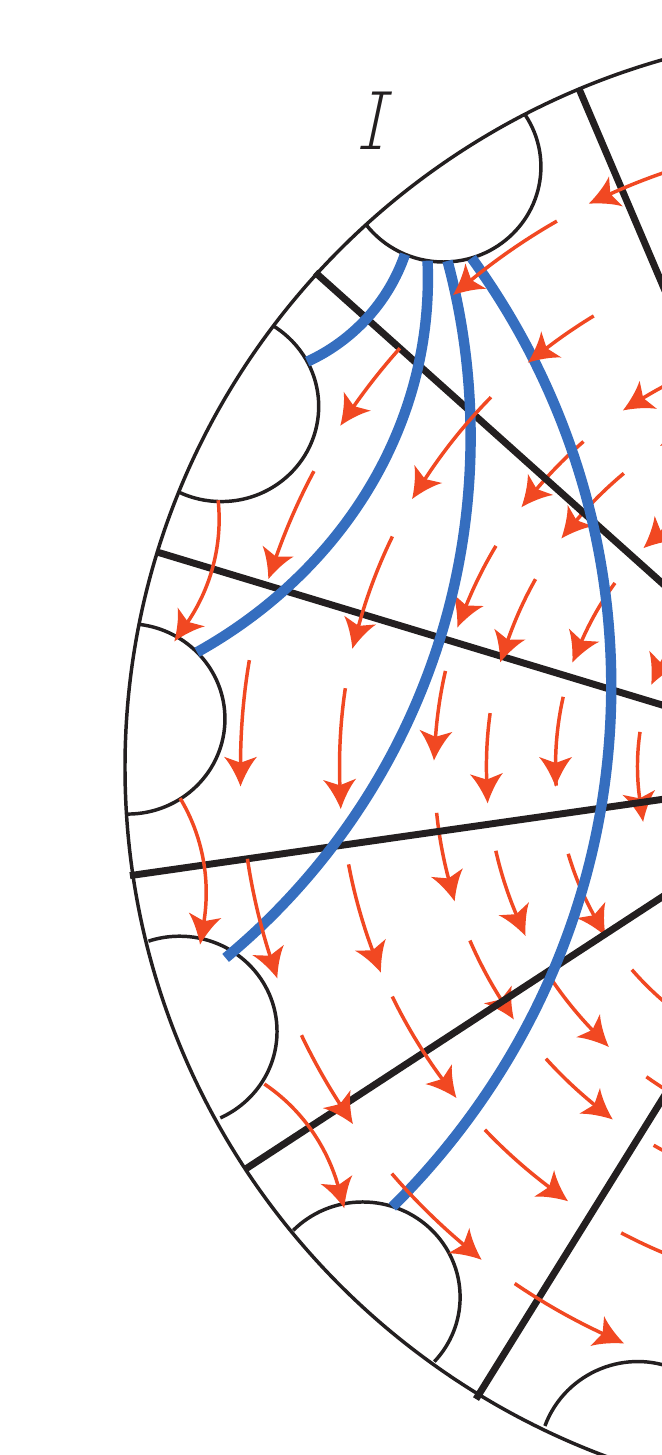}  
\caption{}\label{next_cores}
\end{figure}

\begin{figure}[h]\centering 
\includegraphics[width=10cm]{./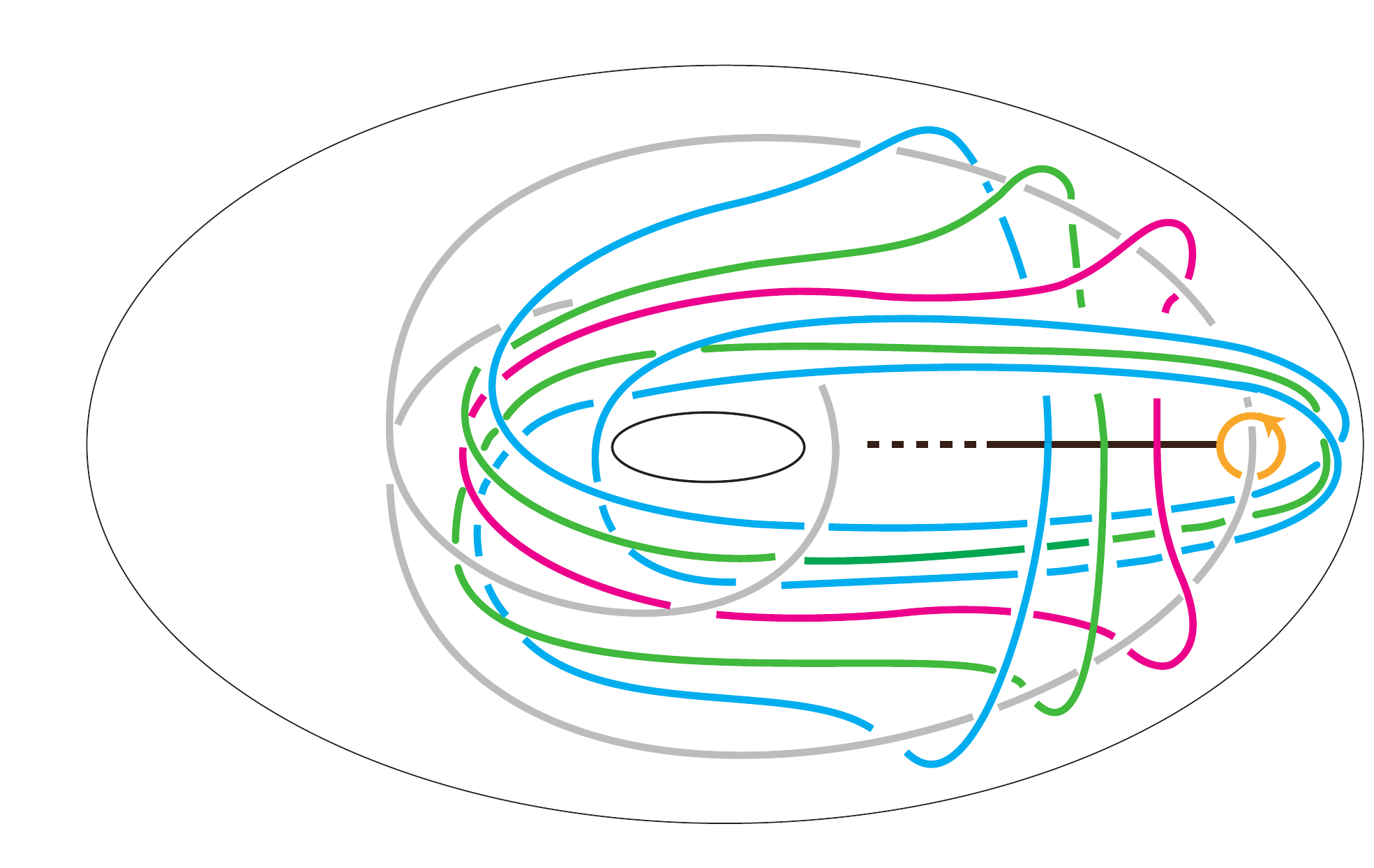}  
\caption{}\label{four_loops}
\end{figure}

We next compute for each ear the embedding of one orbit in addition to the core. This will determine the twists of each ear, and thus the template. 
A second orbit in the same ear is never closed, and so  will have two parts. The first part is a geodesic segment: We choose a geodesic which emanetes from the branchline perpendicularly to the boundary of the domain, very close to the closed geodesic, closer to the 2-cone point along the branch line. For this segment for any of the ears, we find it reaches a larger angle than the core relative to the vector field before returning to it, and is closer than the core to $\beta'$. This is shown in Figure \ref{first_loop} for the first ear. This means, this segment is in the direction of the next core (if it exists), and we can draw this segment it $T_2$ according to our embedding of the cores above.

When the above geodesic segment reaches again the boundary of the fundamental domain as on Figure \ref{branch_line}, its direction is not perpendicular to the boundary. This means that in the unit tangent bundle this geodesic arc did not return to the branch line.
The second part of the orbit will connect it to the branch line through segments of stable manifolds and geodesic segments, as explained above in general. In this case, by choosing a segment close enough to the closed geodesic, the second step can be done in one "move". This is sketched in Figure \ref{first_loop} for the first ear. 

This is the last ingredient needed for determining the template completely, and is then done for each of the template ears.  The resulting template is sketched in figure \ref{embedded_template} for the $(2,5)$ case.

\begin{figure}[h]\centering \includegraphics[width=5cm]{./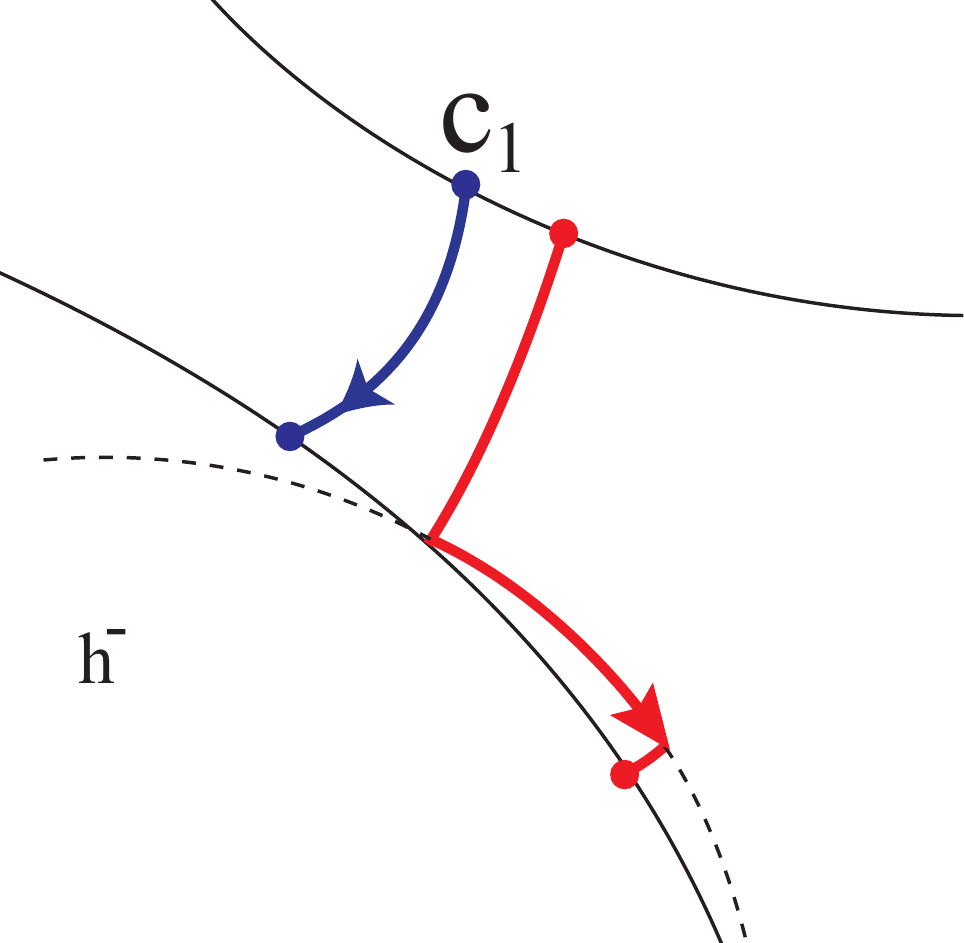} \caption{}\label{first_loop}
\end{figure}

\begin{figure}[h]\centering \includegraphics[width=9cm]{./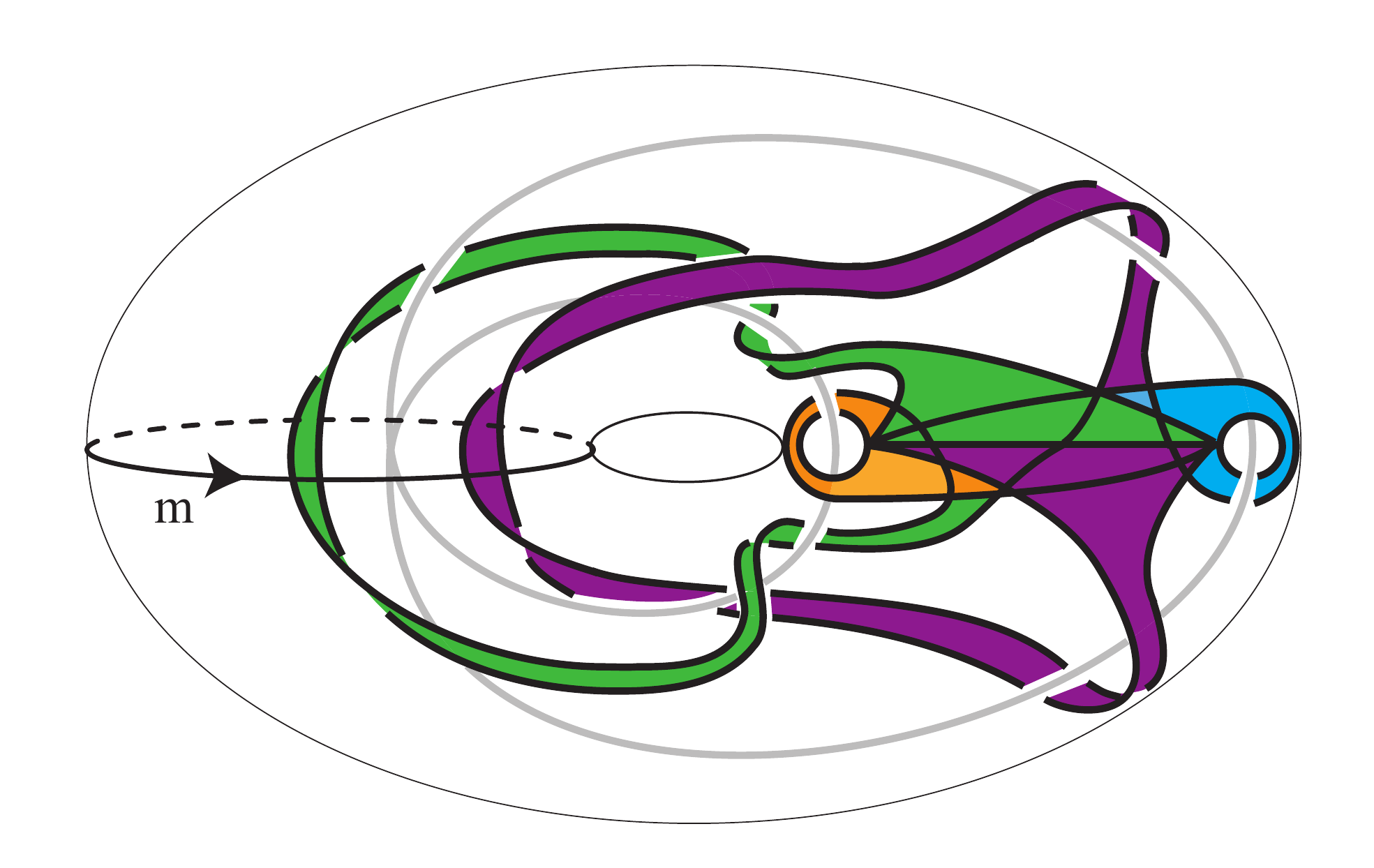} \caption{}\label{embedded_template}
\end{figure}

\begin{rem}
Note that the linking number of any closed geodesic with the missing knot $\xi$ is well defined in any of the Lens spaces, and can be easily read off the sequence of ears of the template the geodesic passes through. 
\end{rem}

We next analyze the structure of the template for a larger $k$. The four loops in Figure \ref{four_loops} can be isotoped to yield Figure \ref{four_loops_isotoped}. It is obvious that this is similar for the four last ears by the symmetry, and that this generalizes to the next ears.  It is true for all ears that for a second orbit starting closer to the middle of the branch line, the maximal angle from the vector field increases, while the image approaches $\beta'$. The number of times the geodesic encircles the $k$-cone point increases by one when passing from one ear to the next. Thus, when one executes the same analysis for the twisting of the ears as in the $(2,5)$ case by examining one more loop in each ear, one finds the additional ears can also be put one beside the other exactly as in Figure \ref{four_loops_isotoped}.

\begin{figure}[h]\centering 
\includegraphics[width=10cm]{./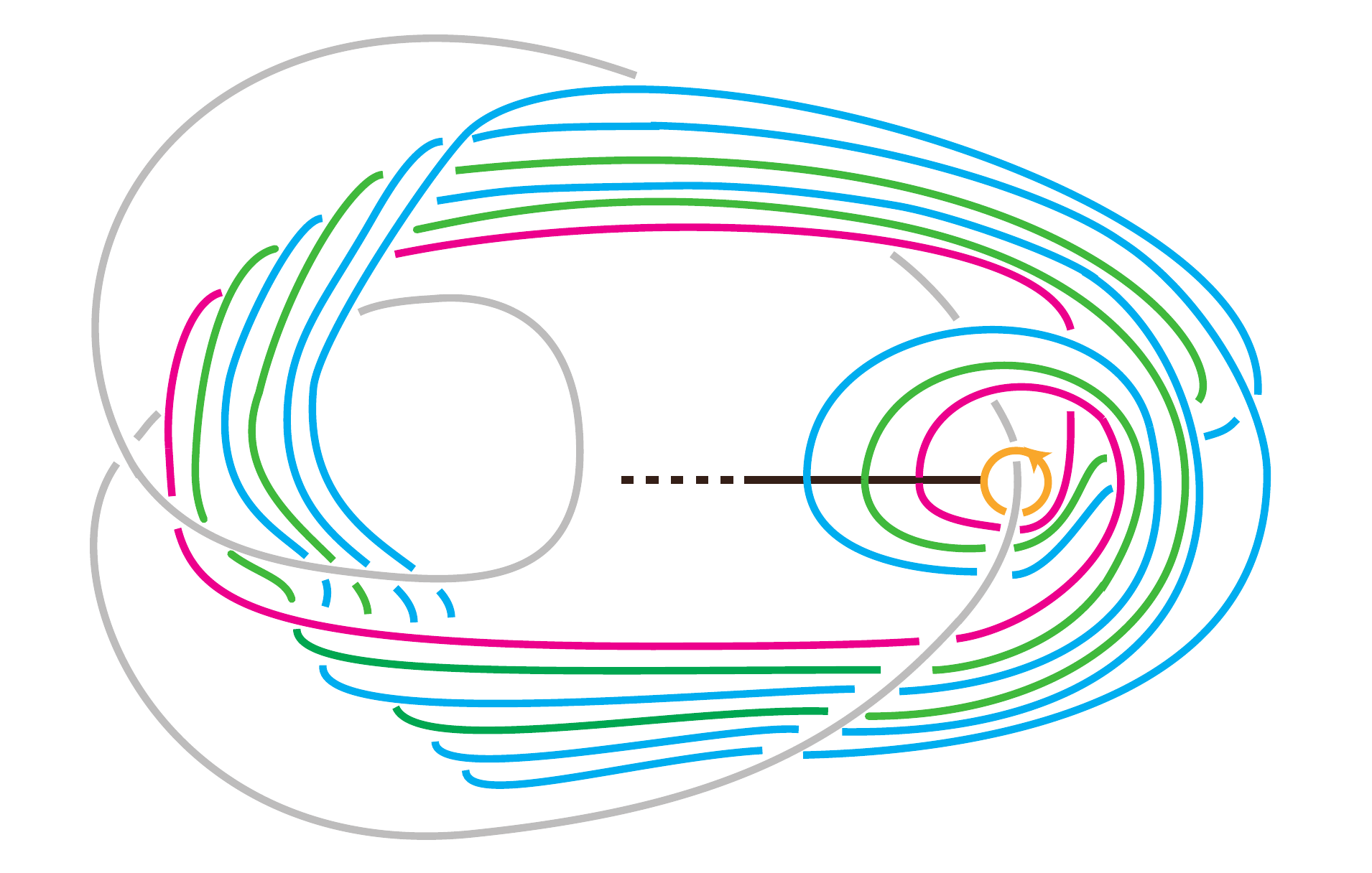}  
\caption{}\label{four_loops_isotoped}
\end{figure}

The last crucial step in our analysis is to glue the ears together along their boundaries, where they run parallel to each other. In this way we obtain a template with a smaller number of ears, that actually does not increase with $k$. As can be seen from Figure \ref {four_loops_isotoped}, we have to add another branch line in the middle of the first loop, and the same is of course true for the last loop, as in Figure \ref{unified_template}.

For each of the three branch lines we now have two ears arriving to it, but it is true only for the middle one that both ears cover it completely. We now address this last fact. Any ear $E_i$ in any of the original templates we constructed above, starts at the central branch line, arrives at one of the side branch lines and then makes some travels along the core of $T_2$ (through one of the long ears in Figure \ref{unified_template}. The number of the travels through the long ear equals $i-1$ if $i<k-1/2$, and $k-i-1$ otherwise. The $i$-{th} ear then passes to the shorter ear emanating from the side branch line to arrive back at the central branch line. Hence, the longer ears in the $(2,k)$ template covers itself only partly when it comes back to the branch line, so that the maximal number of times an orbit can pass through it without arriving back at the central branch line is $k-1/2$. Thus, we find all $(2,k)$ templates, with $k>3$ are in fact very similar, the only change being the length of the two smaller branch lines which is covered by the longer ear.
If one insists on having every incoming ear covering the entire branch line in the definition of a template, one is led to propagate forwards the end point of the long ear, from the branch line forward through the same ear. This has to be done recursively a number of times, and will result in a template very similar to the original template we obtain for the $(2,k)$ case, with a number of ears increasing with $k$.
 In any case we have proved

\begin{thm}
The template for the geodesic flow on the orbifold $\mathcal{O}_{(2,k)}$ for odd $k$, is  a subtemplate of the following template, embedded in the $(2,1)$ torus in the lens space $L(k-2,1)$ .

\begin{figure}[h]\centering \includegraphics[width=9cm]{./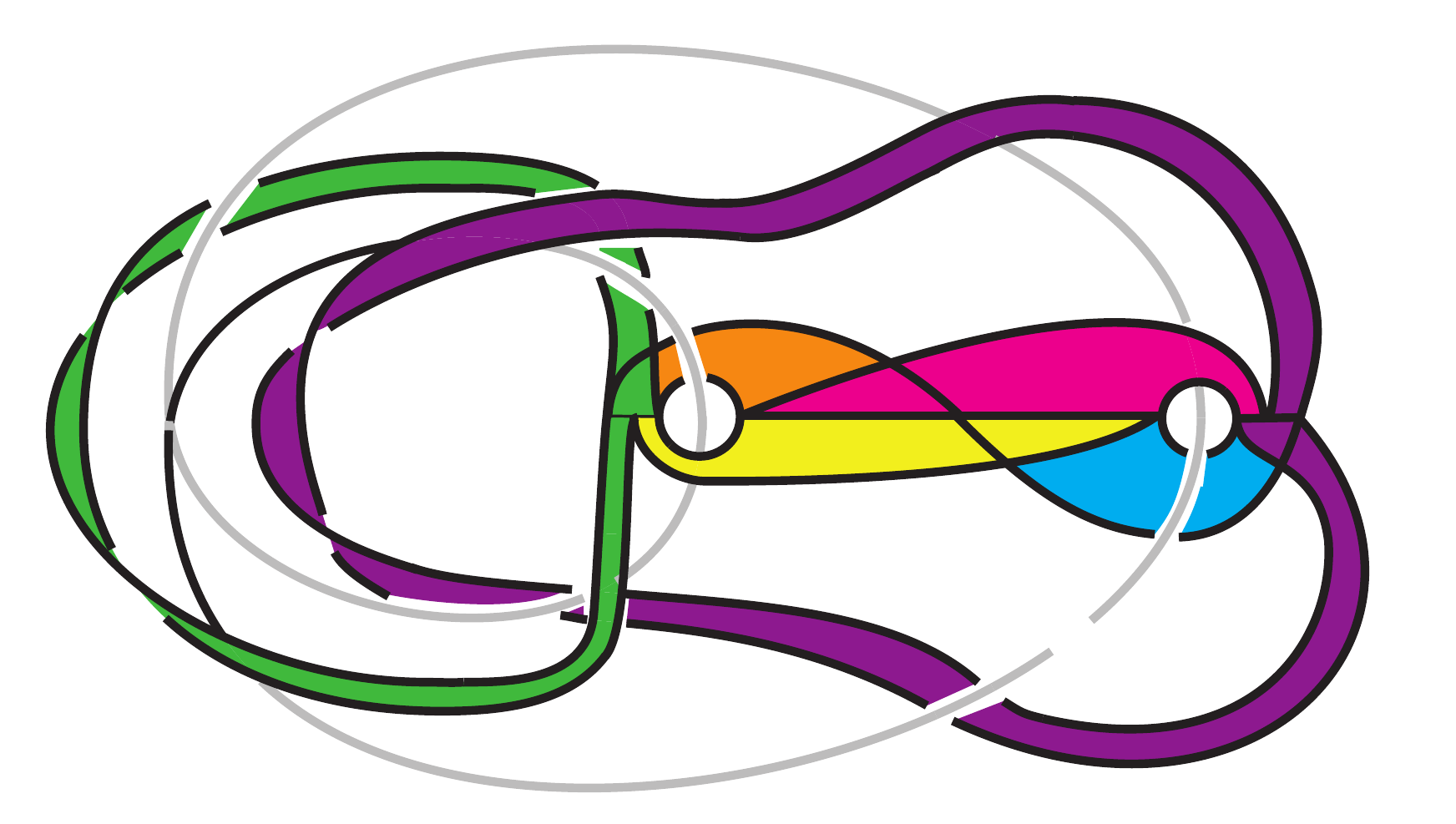}
\label{unified_template}\caption{}
\end{figure}
\end{thm}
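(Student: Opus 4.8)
The proof is a construction, and the plan is to realize the one-dimensional recurrent set explicitly inside $\T_2$ and read off the branched surface it carries. First I would secure the existence of a template from Theorem~\ref{BWthm}: the geodesic flow is hyperbolic, its stable and unstable directions being the two horocycles of \S\ref{Geodesic_flows}, so a template exists and on any finite sublink is matched to the periodic orbits by ambient isotopy. Since $\O_{(2,k)}$ has a cusp, opening the cusp (\S\ref{representation_variety}) makes the periodic set non-dense, so the stable direction may simply be collapsed; this reduces the task to the geometry of the recurrent arcs.

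Second, I would fix the symbolic skeleton. Lifting to the $k$-fold cover $D$ of the fundamental domain and using the rotation of order $2$ about $x$ together with the rotation of order $k$ about $y$, every closed geodesic is represented by arcs crossing $D$ from $J_0$ to one of $J_1,\dots,J_{k-1}$ (Figure~\ref{domain}). I would then check that the chosen branch line of perpendicular pointers meets the class of every such arc, by the mean-value argument: the stable horocycle through a non-perpendicular entry point re-crosses $J_0$, and the entry angle varies monotonically, hence passes through $\pi/2$. This yields the $k-1$ ears $E_1,\dots,E_{k-1}$, with each core $c_i$ the unique closed geodesic its ear contains.

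Third, and this is the technical heart, I would embed this skeleton into $\T_2$ via the explicit correspondence $(p,\theta)\mapsto\T_2$ established in the previous section, using the Euler-number-zero data $c=1$ (so $b=c-1=0$), for which the embedding theorem gives $L(2-k,-1)\cong L(k-2,1)$. The cores are placed by comparing their tangent directions to $\mathcal{V}$ on the $k$-fold cover and tracking how the angle relative to $\mathcal{V}$ oscillates as a core winds about the cusp and the $2$-cone point; the ear $E_i$ acquires $i$ subarcs isotopic to $\beta'$, each successive core reaching a larger maximal angle and lying nearer $\beta'$ (Figures~\ref{cores}--\ref{four_loops}). The twist of each ear is then fixed by one additional, non-closed orbit started just off the core toward the $2$-cone point, whose geodesic part plus single horocyclic return determine the framing (Figure~\ref{first_loop}).

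Finally I would collapse the $k$-dependent picture onto the fixed template. After the isotopy of Figure~\ref{four_loops_isotoped} the ears stack side by side uniformly, and gluing them along their parallel boundaries forces the two auxiliary branch lines of Figure~\ref{unified_template}; the check $M_{k,2,1}\binom{1}{1}=\binom{1}{0}=\beta'$ confirms the gluing and hence the ambient $L(k-2,1)$. The main obstacle I anticipate is the uniformity in $k$ together with the \emph{subtemplate} clause: one must verify that a long ear covers only part of a side branch line, the maximal number of passes being $(k-1)/2$ (an integer precisely because $k$ is odd), so that gluing never manufactures new branch charts, and that the end of a long ear which fails to cover can be propagated forward recursively without altering the ambient object. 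It is exactly this partial, $k$-dependent covering that forces \emph{subtemplate of} rather than equality in the statement.
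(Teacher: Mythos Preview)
Your proposal is correct and follows essentially the same construction as the paper: the same branch line with the horocycle mean-value argument, the same embedding of the $k-1$ ear cores into $\T_2$ by tracking the angle to $\mathcal{V}$, the same twist computation via a second nearby orbit, and the same final gluing of parallel ears producing the two auxiliary branch lines and the partial covering that yields the subtemplate clause. The only minor slip is that it is the \emph{core} $c_i$ (not the ear $E_i$) that has the subarcs isotopic to $\beta'$, and by the $i\leftrightarrow k-i$ symmetry the count is $\min(i,k-i)$ rather than $i$ for all ears; this does not affect the argument.
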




\end{document}